\newtheorem{theorem}{Theorem}[section]
\newtheorem{lemma}[theorem]{Lemma}
\newtheorem{remark}[theorem]{Remark}
\newenvironment{proof}[1][Proof]{\noindent \emph{#1.} }{\hfill \ 
\rule{0.5em}{0.5em}}
\makeatletter\@addtoreset{equation}{section}\makeatother
\makeatletter\@addtoreset{figure}{section}\makeatother
\makeatletter\@addtoreset{table}{section}\makeatother
\begin{document}

\title{A reduced basis approach for calculation of the
Bethe-Salpeter excitation energies using \\ low-rank tensor factorizations} 
%{Reduced basis approach by low-rank tensor factorizations
%with application to the Bethe-Salpeter eigenvalue problem (Draft)}

\author{Peter Benner\thanks{Max Planck Institute for Dynamics of Complex Systems, Magdeburg
({\tt benner@mpi-magdeburg.mpg.de})} \and
        Venera Khoromskaia\thanks{Max Planck Institute for
        Mathematics in the Sciences, Leipzig;
        Max Planck Institute for Dynamics of Complex Systems, Magdeburg ({\tt vekh@mis.mpg.de}).}
        \and Boris N. Khoromskij\thanks{Max Planck Institute for
        Mathematics in the Sciences, Inselstr.~22-26, D-04103 Leipzig,
        Germany ({\tt bokh@mis.mpg.de}).}}

% \author{P. BENNER\thanks{Max-Planck-Institute for Dynamics of Complex Systems, Magdeburg 
% ({\tt benner@mpi-magdeburg.mpg.de})} \and 
%         V. KHOROMSKAIA\thanks{Max-Planck-Institute for
%         Mathematics in the Sciences, Leipzig; 
%         Max-Planck-Institute for Dynamics of Complex Systems, Magdeburg ({\tt vekh@mis.mpg.de}).}
%         \and B. N. KHOROMSKIJ\thanks{Max-Planck-Institute for
%         Mathematics in the Sciences, Inselstr.~22-26, D-04103 Leipzig,
%         Germany ({\tt bokh@mis.mpg.de}).}}
%  

\date{}

\maketitle
\begin{abstract}
The Bethe-Salpeter equation (BSE) is a reliable model for estimating
the absorption spectra in molecules and solids on the basis of
accurate calculation of the excited states from first principles.
 This challenging task includes calculation of the BSE operator in terms of two-electron integrals
tensor represented in molecular orbital basis,
and introduces a complicated algebraic task of solving the arising large matrix eigenvalue problem.
The direct diagonalization of the BSE matrix is practically intractable due to $O(N^6)$
complexity scaling in the size of the atomic orbitals basis set, $N$.
In this paper, we present a new  approach to the computation of Bethe-Salpeter excitation
energies which can lead to relaxation of the numerical costs up to $O(N^3)$. 
The idea is twofold: first, the diagonal plus low-rank tensor approximations to the fully populated
blocks in the BSE matrix is constructed, enabling easier partial eigenvalue solver for a large 
auxiliary system relying only on matrix-vector multiplications with rank-structured matrices. 
And second, a small subset of eigenfunctions from the auxiliary eigenvalue problem 
is selected to build the Galerkin projection of the exact BSE system onto the reduced basis set.
We present numerical tests on BSE calculations for a number of molecules confirming
the $\varepsilon$-rank bounds for the blocks of BSE matrix.
The numerics indicates that the reduced BSE eigenvalue problem with  
small matrices enables calculation of the lowest part of the excitation spectrum
with sufficient accuracy.
\end{abstract}

\noindent\emph{Key words:}
 Bethe-Salpeter equation, Hartree-Fock equation, two-electron integrals, 
tensor decompositions, model reduction, reduced basis, truncated Cholesky factorization.  
%reduced higher order SVD.

\noindent\emph{AMS Subject Classification:}
 65F30, 65F50, 65N35, 65F10
 
\section{Introduction}\label{Introd:MP2}

In modern material science
there is a growing interest to  the {\it ab initio} computation of absorption spectra for molecules 
or surfaces of solids. Due to model limitations, the first principles DFT or the Hartree-Fock 
calculations do not  allow  reliable estimates for excitation energies of molecular structures.
One of the approaches providing means for calculation of the excited states in 
molecules and solids is based on the solution of the Bethe-Salpeter equation (BSE)
\cite{ReOlRuOni:02,OniReRu:02,PiRoGa:2013,RiTouSa1:13,RiTouSa:13}.
The alternative methods treat the problem by using the time-dependent DFT or Green's
function approach \cite{RunGross:84,GrossKohn_book:90,Cas_BSE:95,StScuFr:98,PiRoGa:2013,RiTouSa:13}. 
The approximate coupled cluster calculations 
of electronic excitation energies by using rank decompositions have been 
described in \cite{HoKoPaMa_BSE:13}.
 
The BSE model, originating from high energy physics and incorporating the many-body perturbation 
theory and the Green's function formalism, governs calculation of the excited  
states in a self-consistent way. 
The BSE approach leads to the challenging computational task on the solution of the eigenvalue 
problem for a large fully populated matrix, that is in general non-symmetric.
It is worth to note that the size of BSE matrix scales quadratically
in the size of the basis set, $O(N^2_b)$, used in \emph{ab initio} electronic structure calculations.
Hence the direct diagonalization is limited by $O(N_b^{6})$  complexity  
making the problem
computationally extensive already for moderate size molecules with the size of the atomic orbitals
basis set, $N_b \approx 100$.
Furthermore, the numerical calculation of the matrix elements, based on the 
precomputed two-electron integrals (TEI) in the Hartree-Fock molecular orbitals basis,
has the numerical cost that scales at least as $O(N_b^4)$.
In the case of non-periodic lattice structured compounds (e.g. nano-structures) 
the number of basis functions increases proportionally to the lattice size
that easily leads to intractable problems even for small lattices.
Hence, a procedure that relies entirely on multiplications of a governing BSE matrix 
with vectors is the only viable approach.

In this way, the numerical solution of the BSE eigenvalue problem introduces 
several computationally extensive sub-problems. The commonly used approaches
for fast matrix computations are based on the use of certain  specific data-sparse structures 
in the target matrix. Here, we do not consider the sparsity based on the truncation
of small elements under the given threshold, since, in the particular case of BSE problem,
it seems hardly implementable because of the highly non-regular sparsity pattern arising.

In this paper we study the new approach to the solution of the BSE spectral problem based
on the model reduction via the reduced basis which is determined by the eigenvectors
of a simplified  system matrix of low-rank plus diagonal structure.
We investigate the approximation error of the reduced basis set depending on the
rank truncation parameters. The theoretical and numerical analysis of the existence of the low-rank approximation  
and the respective rank bounds for different matrix blocks in  the BSE matrix is presented.

This approach includes the low-rank decomposition of the matrix blocks in 
the Bethe-Salpeter kernel using the Cholesky  factorization  
of the two-electron integrals (TEI) \cite{VeKhBoKhSchn:12,VeKhorMP2:13} represented in the 
Hartree-Fock molecular orbitals basis.
The simplified block decomposition in the BSE system matrix
is determined by the separation  rank of order $O(N_b)$, which enables compact storage
and fast matrix-vector multiplications in the framework of iterations on a subspace
for finding a small part of the spectrum. 
This opens the way to the rank-truncated arithmetics of the reduced complexity, $O(N_b^{3})$, 
that may gainfully complement the existing numerical approaches to the challenging BSE model.

Methods for solving partial eigenvalue problems for matrices with special sparse structure
have been intensively studied in the literature, see for example,
\cite{BunByeMehrm:92,BeMeXu:97,Kressner_Diss:04,FaKre:06,BeFa:08,NaPoSaad:13,LinLinetc_SelInv:11}
and references therein. 
Brief surveys of commonly used rank-structured tensor formats can be found in 
\cite{KoldaB:07,khor-survey-2011,dc-phd} and in references therein.

The main computational tasks in  the presented approach include the following steps:
\begin{itemize}
\item Precompute the TEI tensor in the Hartree-Fock molecular orbital basis in the form of low-rank factorization.

\item Setting up  matrix blocks in the BSE matrix that  includes solution 
 of the linear matrix equation with  the identity plus low-rank governing matrix and 
 low-rank right-hand side.
 
\item Compute the low-rank approximation to the selected sub-matrices in the BSE matrix subject to the chosen
threshold criteria. 
 
 \item Construct the reduced basis set composed from eigenvectors corresponding
 to several lowest eigenstates of the rank-structured approximation to BSE matrix from the previous step.
 
\item Project the initial BSE matrix onto the reduced basis and diagonalize the arising 
  moderate size Galerkin matrix.

 \item Select the essential part in the spectrum of the projected Galerkin matrix 
 and build the  predicted excitation energies and the respective eigenstates. 
\end{itemize}

The design of the efficient linear algebra
algorithms for fast solution of arising large eigenvalue problems with 
rank-structured matrix blocks will be the topic for future work.

The rest of the paper is organized as follows. Section \ref{RFFTEI:sMP2} outlines 
the truncated Cholesky decomposition
scheme for low-rank factorization of the two-electron integrals tensor in the 
Hartree-Fock molecular orbitals basis, 
that is the building block in the construction of the BSE matrix. 
Section \ref{sec:FI_PostHF} describes the algebraic computational scheme 
for evaluation of the entries in the BSE matrix, analyses low-rank structure 
in the different matrix blocks and describes the reduced basis approach. 
We also analyze numerically the error of the commonly used simplified BSE model, 
the so-called Tamm-Dancoff (TDA) equation.
The most numerically extensive 
part in computation of the BSE matrix blocks is reduced to finding the low-rank solution of the matrix
equation with the diagonal plus low-rank structure in the governing matrix.
Numerical tests indicate the convergence in the senior (lowest) excitation 
energies by increase of the separation ranks. Conclusion summarizes  the main
algorithmic and numerical features of the presented approach and outlines further prospects.

\section{Low-rank approximation of the two-electron integrals in Hartree-Fock calculus} 
\label{RFFTEI:sMP2}

\subsection{Cholesky decomposition of the TEI matrix} \label{ssec:TEI_MP2}

The numerical treatment of the two-electron integrals (TEI) is the main
bottleneck in the numerical solution of the Hartree-Fock equation and in DFT calculations 
for large molecules.

Given the atomic orbitals basis set $\{g_\mu \}_{1\leq \mu \leq N_b}$, $g_\mu\in H^1(\mathbb{R}^3) $,
and the associated two-electron integrals (TEI) tensor ${\textbf{B}}=[b_{\mu \nu \lambda \sigma}]$ 
(see (\ref{eqn:btensor}) in Appendix), the associated $N_b^2 \times N_b^2$  TEI matrix over the large index 
set ${\cal I}\times {\cal J}$, $I=J= {\cal I}_b \otimes {\cal I}_b$, with 
${\cal I}_b:=\{1,...,N_b\}$,
$$
B = mat({\textbf{B}}) =[b_{\mu \nu; \lambda \sigma}] \in \mathbb{R}^{N_b^2 \times N_b^2},
$$ 
is
obtained by matrix unfolding of a tensor $ {\textbf{B}} =[b_{\mu \nu \lambda \sigma}]$.
The TEI matrix $B$ is proven to be symmetric and positive definite.
The optimized Hartree-Fock calculations are based on the incomplete Cholesky decomposition 
\cite{BeLi:77:Chol_TEI,Wils:90-TEI,Higham_CHdec:90,BeKurs:13,ReHelLin:2012,PaHoSchSheMar:13},
of the symmetric and positive definite matrix $B$, 
\begin{equation} \label{eqn:BCholesky}
B\approx L L^T, \quad L\in  \mathbb{R}^{N_b^2\times R_B},  \quad R_B=O(N_b).
\end{equation}
%with the $N\times R_B$ Cholesky factor $L$,
For this computation we apply the new Cholesky decomposition scheme, 
see \cite{VeKhBoKhSchn:12,VeKhorMP2:13}, where the adaptively chosen 
column vectors are calculated in the efficient way by using the precomputed 
redundancy free factorization of the TEI matrix $B$ (counterpart of the density fitting scheme). 
This allows the partial decoupling of the index sets $\{\mu \nu\}$ and $\{\lambda \sigma\}$.

Notice that the Cholesky factorization (\ref{eqn:BCholesky}) can be written in the index form 
\begin{equation} \label{eqn:BCholind}
b_{\mu \nu ; \lambda \sigma}\approx \sum\limits_{k=1}^{R_B} L_k(\mu; \nu) L_k(\sigma;\lambda),
\end{equation}
where the second factor corresponds to the transposed matrix $L_k^T$.
Here  $L_k=L_k(\mu; \nu)$, $k=1,...,R_B$, denotes the $N_b\times N_b$ matrix unfolding of the  
vector $L(:,k)$ in the Cholesky factor $L\in \mathbb{R}^{N_b^2\times R_B}$. 

The results of various numerical experiments indicate  that the truncated Cholesky decomposition 
with the separation rank $O(N_b)$ ensures the satisfactory numerical 
precision $\varepsilon>0$ of order $10^{-5}$ -- $10^{-6}$. 
The refined rank estimate $O(N_b |\log \varepsilon |)$ was observed in numerical experiments for every molecular 
system we considered so far \cite{VeKhBoKhSchn:12,VeKhorMP2:13}.

In the standard quantum chemical implementations in the Gaussian-type atomic orbitals basis
 the numerically confirmed rank bound
$rank(B) \leq C_B N_b$ ($C_B$ is about several ones) allows to reduce the complexity of 
building up the Fock matrix $F$
to $O(N_b^3)$, which is by far dominated by computational cost for the exchange term $K(D)$,
see Appendix.

\subsection{Rank estimates for TEI matrix $V$ and numerical illustrations} 
\label{ssec:TEI_MP2_orb}

Given the complete set of Hartree-Fock molecular orbitals $\{C_p\in \mathbb{R}^{N_b}\}$,
i.e. the column vectors in the coefficients matrix $C\in \mathbb{R}^{N_b\times N_b}$, 
and the corresponding energies $\{\varepsilon_p\}$, $p=1,2,...,N_b$  
(see Appendix, where $\lambda_i$ correspond to $\varepsilon_p$). In commonly used notations,
$\{C_i\}$ and $\{C_a\}$ denote the occupied and virtual orbitals, respectively.

For BSE calculations, one has to transform the TEI tensor  ${\bf B}=[b_{\mu \nu \lambda \sigma}]$,
corresponding to the initial AO basis set,
to those represented in the molecular orbital (MO) basis, 
% ${\bf V}=[v_{i a j b}]$,    %$\psi_k$, % represented by (\ref{expand}),
\begin{equation} \label{eqn:TEI_Mol}
{\bf V}=[v_{i a j b}]: \quad v_{i a j b}= \sum\limits_{\mu, \nu, \lambda, \sigma =1}^{N_b} 
C_{\mu i} C_{\nu a}  C_{\lambda j} C_{\sigma b} b_{\mu \nu,  \lambda \sigma}, \quad 
a,b,i,j \in \{1,...,N_b\}.
%a, b \in {\cal I}_{v},\quad i,j\in {\cal I}_{o},
\end{equation}
The BSE calculations make use of the two subtensors of ${\bf V}$ specified by the index sets
${\cal I}_{o}:=\{1,...,N_{orb}\}$ and ${\cal I}_{v}:=\{N_{orb}+1,...,N_{b}\}$, with $N_{orb}$ 
denoting the number of occupied orbitals (see Appendix). 
The first subtensor is defined as in the case of MP2 calculations,
\begin{equation} \label{eqn:TEI_Mol_MP2}
{\bf V}=[v_{i a j b}]: \quad a, b \in {\cal I}_{v},\quad i,j\in {\cal I}_{o},
\end{equation}
while the second one lives on the extended index set
\begin{equation} \label{eqn:TEI_Mol_ext}
{\bf V}=[v_{t u r s}]: \quad r, s \in {\cal I}_{v},\quad t,u\in {\cal I}_{o}.
\end{equation}
In the following, we shall use the notation 
$
N_{v}= N_b - N_{orb}, \; N_{ov}=N_{orb} N_{v}.
$

Denote the associated matrix by ${V}=[v_{i a, j b}]\in \mathbb{R}^{N_{ov}\times N_{ov}}$ 
in case (\ref{eqn:TEI_Mol_MP2}), 
and similar by ${V}=[v_{tu, rs}]\in \mathbb{R}^{N_{o}^2\times N_{v}^2}$ in case (\ref{eqn:TEI_Mol_ext}).
The straightforward computation of the matrix $ V$ by above representations makes the 
dominating impact to the overall 
numerical cost of order $O(N_b^5)$ in the evaluation of the block entries in the BSE matrix. 
The method of complexity $O(N_b^4)$ based on the low-rank
tensor decomposition of the matrix $V$ was introduced in \cite{VeKhorMP2:13} (see \S 2.2).

It can be shown that the rank $R_B=O(N_b)$ approximation to the TEI matrix 
$
B\approx L L^T,
$ 
with the $N\times R_B$ Cholesky factor $L$, allows to introduce the low-rank representation 
of the tensor ${\bf V}$, and then reduce the asymptotic complexity of 
calculations to $O(N_b^4)$, see \cite{VeKhorMP2:13}.
Indeed, let $C_m$ be the m-th column of the coefficient matrix 
$C=\{C_{\mu i}\}\in \mathbb{R}^{N_b \times N_b}$. 
Then substitution of (\ref{eqn:BCholind}) to (\ref{eqn:TEI_Mol}) in case (\ref{eqn:TEI_Mol_MP2})  leads to
\begin{align}\label{eqn:VChol}
 v_{i a j b}
% & = \sum\limits_{\mu, \nu, \lambda, \sigma =1}^{N_b} 
% C_{\mu i} C_{\nu a}  C_{\lambda j} C_{\sigma b} b_{\mu \nu  \lambda \sigma}\\ \nonumber
& =
\sum\limits_{k=1}^{R_B} \sum\limits_{\mu, \nu, \lambda, \sigma =1}^{N_b} 
C_{\mu i} C_{\nu a}  C_{\lambda j} C_{\sigma b} L_k(\mu; \nu) L_k(\sigma;\lambda) \\ \nonumber
& = \sum\limits_{k=1}^{R_B} \left(\sum\limits_{\mu, \nu =1}^{N_b} C_{\mu i} C_{\nu a}L_k(\mu; \nu)\right)
\left(\sum\limits_{\lambda, \sigma =1}^{N_b}C_{\lambda j} C_{\sigma b}  L_k(\sigma;\lambda) \right) \\ \nonumber
& = \sum\limits_{k=1}^{R_B}  (C_i^T L_k C_a)   (C_b^T L_k^T C_j)
 = \sum\limits_{k=1}^{R_B}  (C_i^T L_k C_a)   (C_j^T L_k C_b)^T. 
\end{align}
 Similar factorization can be derived in case (\ref{eqn:TEI_Mol_ext}).
The precise formulation is given by the following lemma \cite{VeKhorMP2:13}, 
which will be used in further considerations.
\begin{lemma}\label{lem:MP2.V}
Let the rank-$R_B$ Cholesky decomposition of the matrix $B$ be given by (\ref{eqn:BCholesky}), then
the matrix unfolding $V=[v_{i a ;  j b}]$  allows a rank decomposition with  $rank(V) \leq R_B$. 
The $R_B$-term representation of the matrix unfolding $V=[v_{i a ;  j b}]$ takes a form
\[
 V=L_V L_V^T,\quad L_V\in \mathbb{R}^{N_{ov} \times R_B},  
\]
where the column vectors are given by
$$
L_V((i-1)N_{vir} +a- N_{orb};k)= C_i^T L_k C_a,\quad k=1,...,R_B, 
\;\; a\in {\cal I}_v,\;\; i\in {\cal I}_o.
$$
 In case (\ref{eqn:TEI_Mol_ext}) we have $V=U_V W_V^T \in \mathbb{R}^{N_{o}^2\times N_{v}^2}$ with
$U_V \in \mathbb{R}^{N_{o}^2\times R_B}$ and $W_V \in \mathbb{R}^{N_{ov}^2\times R_B}$.
\end{lemma}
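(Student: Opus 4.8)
The plan is to prove Lemma~\ref{lem:MP2.V} by directly substituting the Cholesky factorization of $B$ into the definition of the MO-basis tensor $\mathbf{V}$ and then regrouping the summations so that the index pairs $(i,a)$ and $(j,b)$ decouple. The derivation in equation~(\ref{eqn:VChol}) already carries out this computation for case~(\ref{eqn:TEI_Mol_MP2}), so the main work is to recognize that it establishes exactly the claimed factored form and to extract the explicit column vectors.

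First I would start from the index-form Cholesky decomposition (\ref{eqn:BCholind}), namely $b_{\mu\nu;\lambda\sigma}\approx\sum_{k=1}^{R_B} L_k(\mu;\nu)L_k(\sigma;\lambda)$, and insert it into the transformation~(\ref{eqn:TEI_Mol}). The key algebraic step is to interchange the order of summation, pulling the outer $k$-sum to the front, and then to observe that the summation over $(\mu,\nu)$ involves only the coefficient vectors $C_i,C_a$ and the factor $L_k$, whereas the summation over $(\lambda,\sigma)$ involves only $C_j,C_b$ and $L_k$. This separation yields
\[
v_{iajb}=\sum_{k=1}^{R_B}\bigl(C_i^T L_k C_a\bigr)\bigl(C_j^T L_k C_b\bigr),
\]
which is precisely the product structure displayed in the last line of~(\ref{eqn:VChol}). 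The only subtlety worth checking is the transpose bookkeeping in the second factor: the index ordering $L_k(\sigma;\lambda)$ corresponds to $L_k^T$, so the contraction $\sum_{\lambda,\sigma}C_{\lambda j}C_{\sigma b}L_k(\sigma;\lambda)$ equals $C_b^T L_k^T C_j=(C_j^T L_k C_b)^T$, and since both sides are scalars the transpose is inert. I would make this bookkeeping explicit so that the reader is convinced no index has been mislabeled.

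Next I would define the matrix $L_V\in\mathbb{R}^{N_{ov}\times R_B}$ by setting its $((i-1)N_{vir}+a-N_{orb},k)$ entry equal to the scalar $C_i^T L_k C_a$, exactly as in the statement. With this definition, the row of $V=[v_{ia;jb}]$ indexed by the composite label $(i,a)$ and the column indexed by $(j,b)$ satisfy $v_{ia;jb}=\sum_k L_V((i,a);k)\,L_V((j,b);k)$, which is the $((i,a),(j,b))$ entry of $L_V L_V^T$. Hence $V=L_V L_V^T$, and since $L_V$ has $R_B$ columns the rank bound $\mathrm{rank}(V)\le R_B$ follows immediately. The symmetric form $L_V L_V^T$ also reflects the symmetry and positive semidefiniteness inherited from $B\approx LL^T$. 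For case~(\ref{eqn:TEI_Mol_ext}), the same computation applies but the two composite index sets now have different cardinalities, $N_o^2$ for the pair $(t,u)$ and $N_v^2$ for the pair $(r,s)$; the factorization therefore takes the asymmetric form $V=U_V W_V^T$ with factors of the stated dimensions, the only change being which coefficient columns enter each of the two contractions.

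I do not anticipate a genuine obstacle here, since the result is essentially a reindexing of the separable Cholesky structure; the one point demanding care is the transpose and index-ordering convention in passing from the tensor entries $b_{\mu\nu;\lambda\sigma}$ to the factored scalars, because a single misplaced index would swap occupied and virtual labels and corrupt the dimensions of $L_V$. I would therefore treat the verification of the composite-index map $(i,a)\mapsto(i-1)N_{vir}+a-N_{orb}$ as the part most deserving of explicit checking, confirming in particular that it is a bijection onto $\{1,\dots,N_{ov}\}$ so that $L_V$ is genuinely an $N_{ov}\times R_B$ matrix.
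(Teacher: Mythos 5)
Your proposal is correct and follows essentially the same route as the paper: the paper's own justification is precisely the substitution of the index-form Cholesky factorization (\ref{eqn:BCholind}) into (\ref{eqn:TEI_Mol}) followed by the regrouping displayed in (\ref{eqn:VChol}), with the extended case (\ref{eqn:TEI_Mol_ext}) handled by the same computation and only asserted as ``similar.'' Your additional explicit checks --- the scalar-transpose bookkeeping for $L_k(\sigma;\lambda)$ and the bijectivity of the composite-index map $(i,a)\mapsto(i-1)N_{vir}+a-N_{orb}$ onto $\{1,\dots,N_{ov}\}$ --- are sound and, if anything, more careful than the paper's presentation.
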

Representation (\ref{eqn:VChol}) indicates that it is necessary to compute and store 
the only $L_V$, $U_V$ and $W_V$ factors in the above rank-structured factorizations.

Lemma \ref{lem:MP2.V} provides the upper bounds on $rank(V)$ in the representation (\ref{eqn:VChol}) which
might be larger than that obtained by the $\varepsilon$-rank truncation. 
It can be shown that the $\varepsilon$-rank of the matrix $ V$ remains of the same magnitude as
those for the TEI matrix $B$ obtained by its $\varepsilon$-rank truncated Cholesky decomposition 
(see numerics in \S\ref{ssec:Factor_BSE}).

Numerical tests in \cite{VeKhorMP2:13}
indicate that the singular values of the TEI matrix $B$ decay exponentially as 
\begin{equation} \label{eqn:svTEI}
\sigma_k \leq C e^{- \frac{z}{N_b} k},
\end{equation}
where the constant $z>0$ depends weakly on the molecule configuration. 
If we define $R_B(\varepsilon)$ as the minimal number satisfying the condition 
\[
\sum\limits_{k=R_B(\varepsilon)+1}^{R_B} \sigma_k^2 \leq \varepsilon^2,
\]
then  estimate (\ref{eqn:svTEI}) leads to the $\varepsilon$-rank
bound $R_B(\varepsilon) \leq C N_b |\log \varepsilon|$, which will be postulated in the following discussion.

Our goal is to justify that $R_V(\varepsilon)$ increases only logarithmically  in $\varepsilon$, 
similar to the bound for $R_B(\varepsilon)$.
To that end we introduce the SVD decomposition of the matrix $B$,
\[
 B= U D_B U^T, \quad U\in \mathbb{R}^{N_b^2 \times R_B}, \; D_B\in \mathbb{R}^{R_B \times R_B},
\]
which can be written in the index form
\begin{equation} \label{eqn:svdTEI}
 b_{\mu \nu ; \lambda \sigma} = \sum\limits_{k=1}^{R_B} \sigma_k U_k(\mu; \nu) U_k(\sigma;\lambda),
\end{equation}
with $U_k=[U_k(\mu; \nu)]\in \mathbb{R}^{N_b \times N_b}$ and $\|U_k \|_F =1$, $k=1,...,R_B$.
\begin{lemma}\label{lem:SVofV}
 For given $\varepsilon>0$,  there exists a rank-$r$ approximation $V_r$
to the matrix $V$ such that $r\leq R_B(\varepsilon)$ and
\[
\|V_r -V \| \leq  C N_b \varepsilon |\log \varepsilon|,
\]
where the constant $C$ does not depend of $\varepsilon$.
\end{lemma}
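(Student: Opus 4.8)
The plan is to reproduce, at the level of the SVD rather than the Cholesky factorization, the factorization of $V$ obtained in Lemma~\ref{lem:MP2.V}, and then to control the error made by discarding the tail of singular values. First I would substitute the SVD (\ref{eqn:svdTEI}) of $B$ into the MO transform (\ref{eqn:TEI_Mol}) and repeat verbatim the manipulation that led to (\ref{eqn:VChol}). Writing $C_i$ for the occupied and $C_a$ for the virtual columns of $C$, this produces the symmetric eigen-type expansion
\[
v_{ia;jb}=\sum_{k=1}^{R_B}\sigma_k\,(C_i^T U_k C_a)(C_j^T U_k C_b),
\qquad\text{i.e.}\qquad
V=\sum_{k=1}^{R_B}\sigma_k\,p_k p_k^T,
\]
where $p_k\in\mathbb{R}^{N_{ov}}$ is the vectorization of the occupied--virtual block $P_k:=C_o^T U_k C_v=[\,C_i^T U_k C_a\,]_{i\in{\cal I}_o,\,a\in{\cal I}_v}$, and $C_o=[C_i]_{i\in{\cal I}_o}$, $C_v=[C_a]_{a\in{\cal I}_v}$ denote the occupied and virtual sub-blocks of $C$. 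Equivalently $V=\mathcal{T}B\mathcal{T}^T$ with the transform $\mathcal{T}_{(ia),(\mu\nu)}=C_{\mu i}C_{\nu a}$.

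The key estimate is that the vectors $p_k$ are uniformly bounded. Since the Hartree--Fock molecular orbitals are orthonormal, $C$ is orthogonal and the sub-blocks $C_o,C_v$ have orthonormal columns, so $\|C_o\|_2=\|C_v\|_2=1$. Combining $\|U_k\|_F=1$ with the submultiplicativity $\|AMB\|_F\le\|A\|_2\,\|M\|_F\,\|B\|_2$ gives
\[
\|p_k\|_2=\|P_k\|_F=\|C_o^T U_k C_v\|_F\le\|U_k\|_F=1,
\]
that is, the MO transform is a contraction, $\|\mathcal{T}\|_2\le1$.

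With this in hand I would set $r:=R_B(\varepsilon)$ and take $V_r:=\sum_{k=1}^{r}\sigma_k\,p_k p_k^T$, which has rank at most $r\le R_B(\varepsilon)$ as required. Using $\|p_k\|_2\le1$,
\[
\|V-V_r\|=\Big\|\sum_{k>r}\sigma_k\,p_k p_k^T\Big\|\le\sum_{k=r+1}^{R_B}\sigma_k\,\|p_k\|_2^2\le\sum_{k=r+1}^{R_B}\sigma_k.
\]
By the definition of $R_B(\varepsilon)$ one has $\sigma_{r+1}^2\le\sum_{k>r}\sigma_k^2\le\varepsilon^2$, hence $\sigma_k\le\sigma_{r+1}\le\varepsilon$ for every $k>r$; bounding the number of discarded terms by the $\varepsilon$-rank estimate $R_B(\varepsilon)\le C N_b|\log\varepsilon|$ granted before the lemma yields
\[
\|V-V_r\|\le\big(R_B-r\big)\,\sigma_{r+1}\le C N_b\,\varepsilon\,|\log\varepsilon|,
\]
which is the claimed bound. (In fact, the contraction property already gives the sharper $\|V-V_r\|_2\le\|\mathcal{T}\|_2^2\,\|B-B_r\|_2=\sigma_{r+1}\le\varepsilon$, and likewise $\|V-V_r\|_F\le\|B-B_r\|_F\le\varepsilon$; since $|\log\varepsilon|\ge1$ for $\varepsilon<e^{-1}$, either estimate implies the stated one.)

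The only genuinely non-routine step is the contraction bound $\|p_k\|_2\le1$: it rests on the orthonormality of the coefficient matrix $C$, and in a non-orthonormal AO basis (overlap $S\neq I$, so $C^T S C=I$) it must be replaced by a bound involving $\|S^{-1/2}\|_2=\lambda_{\min}(S)^{-1/2}$, which is precisely where any honest $N_b$-dependent prefactor can enter; this is what the generous slack in the stated bound is meant to absorb. Once the decay (\ref{eqn:svTEI}) and the rank bound on $R_B(\varepsilon)$ are taken as given, the remaining tail estimate is entirely routine.
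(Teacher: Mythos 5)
Your proposal is correct and begins exactly as the paper's proof does: both substitute the SVD (\ref{eqn:svdTEI}) of $B$ into the MO transform (\ref{eqn:TEI_Mol}), obtain the expansion $V=\sum_{k=1}^{R_B}\sigma_k V_k V_k^T$ with $V_k(i;a)=C_i^T U_k C_a$ (your $p_k$), truncate at $r=R_B(\varepsilon)$, and invoke the postulated bound $R_B=O(N_b|\log\varepsilon|)$. Where you genuinely diverge is the tail estimate. The paper applies Cauchy--Schwarz, $\bigl\|\sum_{k>r}\sigma_k V_k V_k^T\bigr\|\le\bigl(\sum_{k>r}\sigma_k^2\bigr)^{1/2}\bigl(\sum_{k>r}\|V_k\|^4\bigr)^{1/2}$, together with the Frobenius bound $\|V_k\|_F^2\le\sum_i\|C_i\|^2\sum_a\|C_a\|^2$, which holds for an \emph{arbitrary} coefficient matrix $C$ and simply absorbs the $\|C\|$-factors into the $\varepsilon$-independent constant. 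You instead use the termwise bound $\sigma_k\le\sigma_{r+1}\le\varepsilon$ plus the contraction property $\|p_k\|_2\le 1$; this buys a more transparent argument and, as you observe via $V=\mathcal{T}B\mathcal{T}^T$ with $\|\mathcal{T}\|_2\le 1$, even the sharper bound $\|V-V_r\|\le\sigma_{r+1}\le\varepsilon$, which exposes slack in the lemma's stated estimate. The caveat is that the contraction rests on $C^TC=I$, whereas in the paper's Galerkin setting the orbitals satisfy $C^TSC=I$ with the AO overlap matrix $S\ne I$ (see (\ref{HF discr})), so $\|C_o\|_2,\|C_v\|_2\le\|S^{-1/2}\|_2=\lambda_{\min}(S)^{-1/2}$ rather than $1$; you flag precisely this and note that the resulting prefactor is $\varepsilon$-independent, so the lemma's conclusion survives. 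With that caveat made explicit, your argument is complete and, if anything, cleaner than the paper's (whose Cauchy--Schwarz step, incidentally, should carry the factor $(R_B-R_B(\varepsilon))^{1/2}$ rather than $R_B-R_B(\varepsilon)$ --- a harmless over-estimate in the same direction as your counting of discarded terms).
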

\begin{proof}
We estimate the $R_B(\varepsilon)$-term truncation error by using representation (\ref{eqn:svdTEI}),
 \begin{align}\label{eqn:V_SVD}
 v_{i a j b}   
% & = \sum\limits_{\mu, \nu, \lambda, \sigma =1}^{N_b} 
% C_{\mu i} C_{\nu a}  C_{\lambda j} C_{\sigma b} b_{\mu \nu  \lambda \sigma}\\ \nonumber
 & =
 \sum\limits_{k=1}^{R_B}\sigma_k  \sum\limits_{\mu, \nu, \lambda, \sigma =1}^{N_b} 
 C_{\mu i} C_{\nu a}  C_{\lambda j} C_{\sigma b} U_k(\mu; \nu) U_k(\sigma;\lambda) \\ \nonumber
& = \sum\limits_{k=1}^{R_B}  \sigma_k \left(\sum\limits_{\mu, \nu =1}^{N_b} C_{\mu i} C_{\nu a}U_k(\mu; \nu)\right)
\left(\sum\limits_{\lambda, \sigma =1}^{N_b}C_{\lambda j} C_{\sigma b}  U_k(\sigma;\lambda) \right) \\ \nonumber
& = \sum\limits_{k=1}^{R_B}  \sigma_k (C_i^T U_k C_a)   (C_b^T U_k^T C_j) %\\ \nonumber
 = \sum\limits_{k=1}^{R_B}\sigma_k  (C_i^T U_k C_a)   (C_j^T U_k C_b)^T,
\end{align}
which can be presented in the matrix form $V=\sum\limits_{k=1}^{R_B} \sigma_k V_k V_k^T $,
where $V_k(i;a)= C_i^T U_k C_a$.
By definition of $R_B(\varepsilon)$ we have 
$\sum\limits_{k=R_B(\varepsilon)+1}^{R_B} \sigma_k^2 \leq \varepsilon^2$. 
Hence, the error of rank-$R_B(\varepsilon)$ approximation defined by 
$V_r = \sum\limits_{k=1}^{R_B(\varepsilon)} \sigma_k V_k V_k^T$, can be bounded by
\begin{align}\label{eqn:V_SVD_trunc}
\| \sum\limits_{k=R_B(\varepsilon)+1}^{R_B} \sigma_k V_k V_k^T \|
& \leq (\sum\limits_{k=R_B(\varepsilon)+1}^{R_B} \sigma_k^2)^{1/2} 
(\sum\limits_{k=R_B(\varepsilon)+1}^{R_B} \|V_k \|^4)^{1/2} \\ \nonumber
& \leq \varepsilon (\sum\limits_{k=R_B(\varepsilon)+1}^{R_B} \|V_k \|^4 )^{1/2} \\ \nonumber
& \leq \varepsilon (R_B - R_B(\varepsilon))\|C\|_{{\cal I}_o}^2 \|C\|_{{\cal I}_v}^2,
\end{align}
taking into account that $\|U_k  \|=1$, $k=1,...,R_B$, and the Frobenius norm estimate 
\[
\|V_k \|^2 = \|V_k(i;a) \|_F^2 = \|C_i^T U_k C_a\|_F^2 \leq  \|U_k \|^2 \sum\limits_{i,a} \|C_i\|^2 \|C_a\|^2
\leq \sum\limits_{i} \|C_i\|^2 \sum\limits_{a} \|C_a\|^2.  
\]
We suppose that $R_B= O(N_b |\log \varepsilon|)$, then
the multiple of $\varepsilon |\log \varepsilon|$ in (\ref{eqn:V_SVD_trunc}) does not depend 
on $\varepsilon$, that proves our lemma.
\end{proof}

The storage cost of these decompositions restricted to the active index set $I_{v} \times I_{o}$
amounts to $R_V(\varepsilon) N_{v} N_{orb}$. The complexity of straightforward computation 
on the active index set can be estimated by $O(R_B  N_{b}^2 N_{ov})$. 
\begin{figure}[htbp]
\centering
\includegraphics[width=5.2cm]{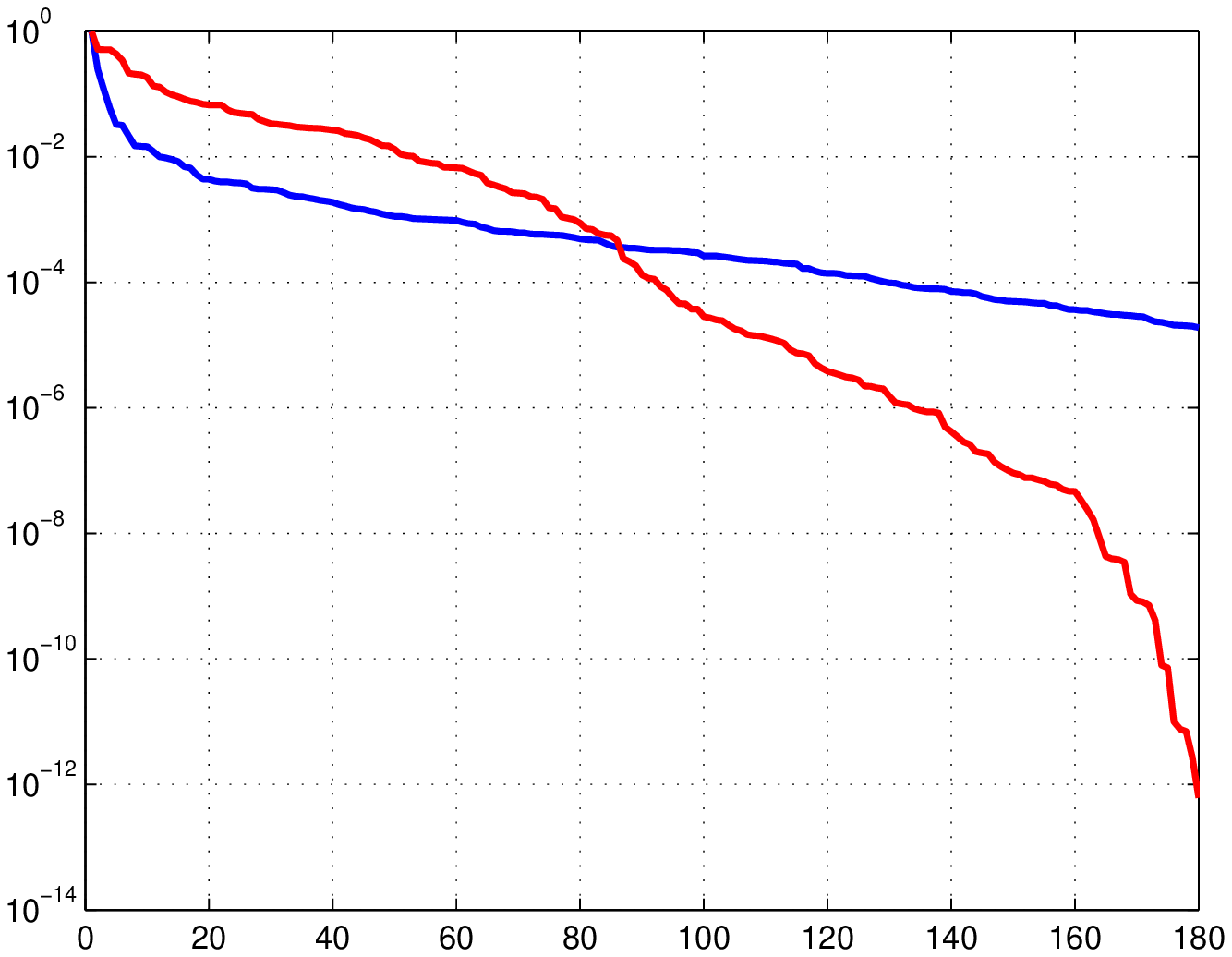}    %{BSE_V_svd_H2O.eps}
\includegraphics[width=5.2cm]{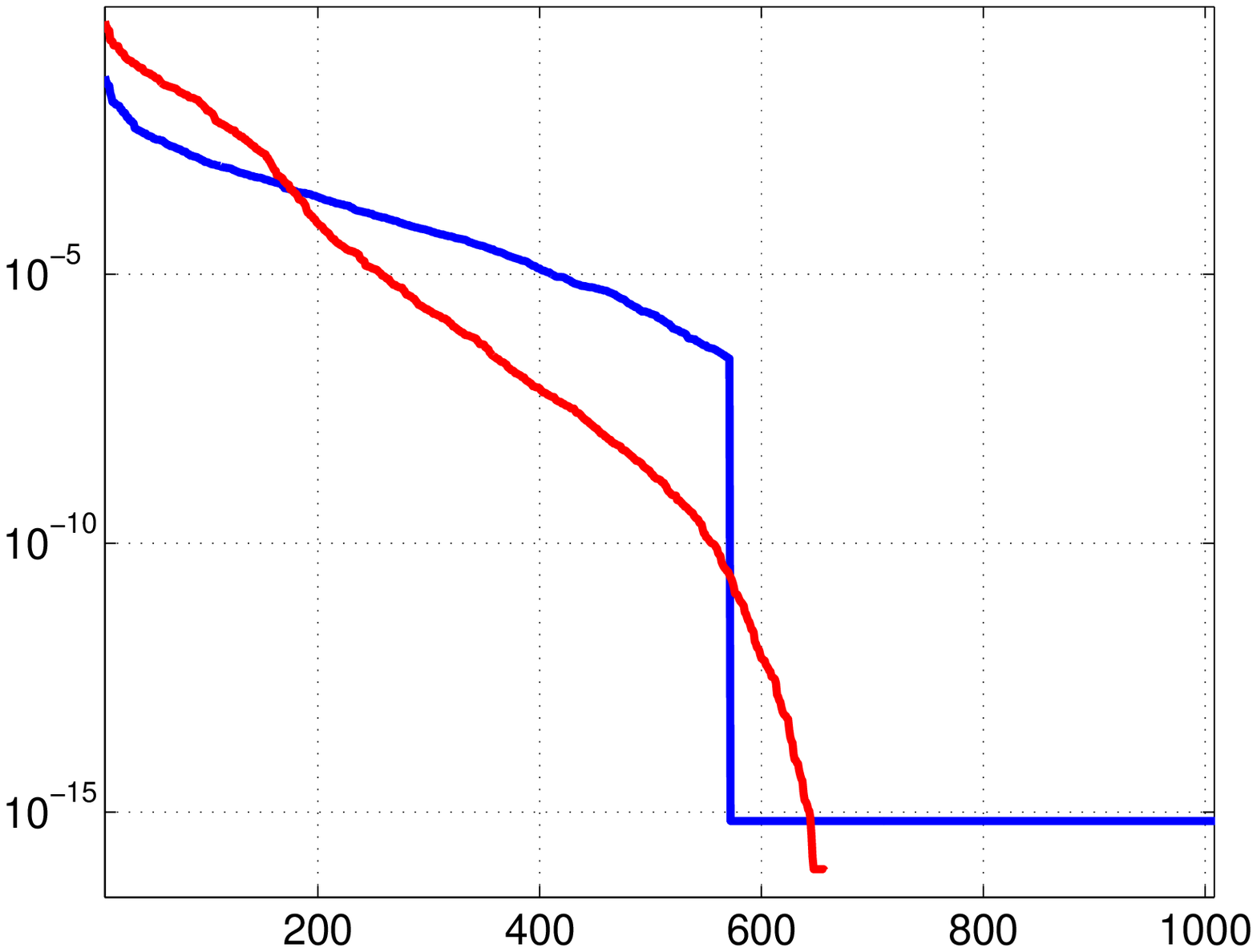}
\includegraphics[width=5.2cm]{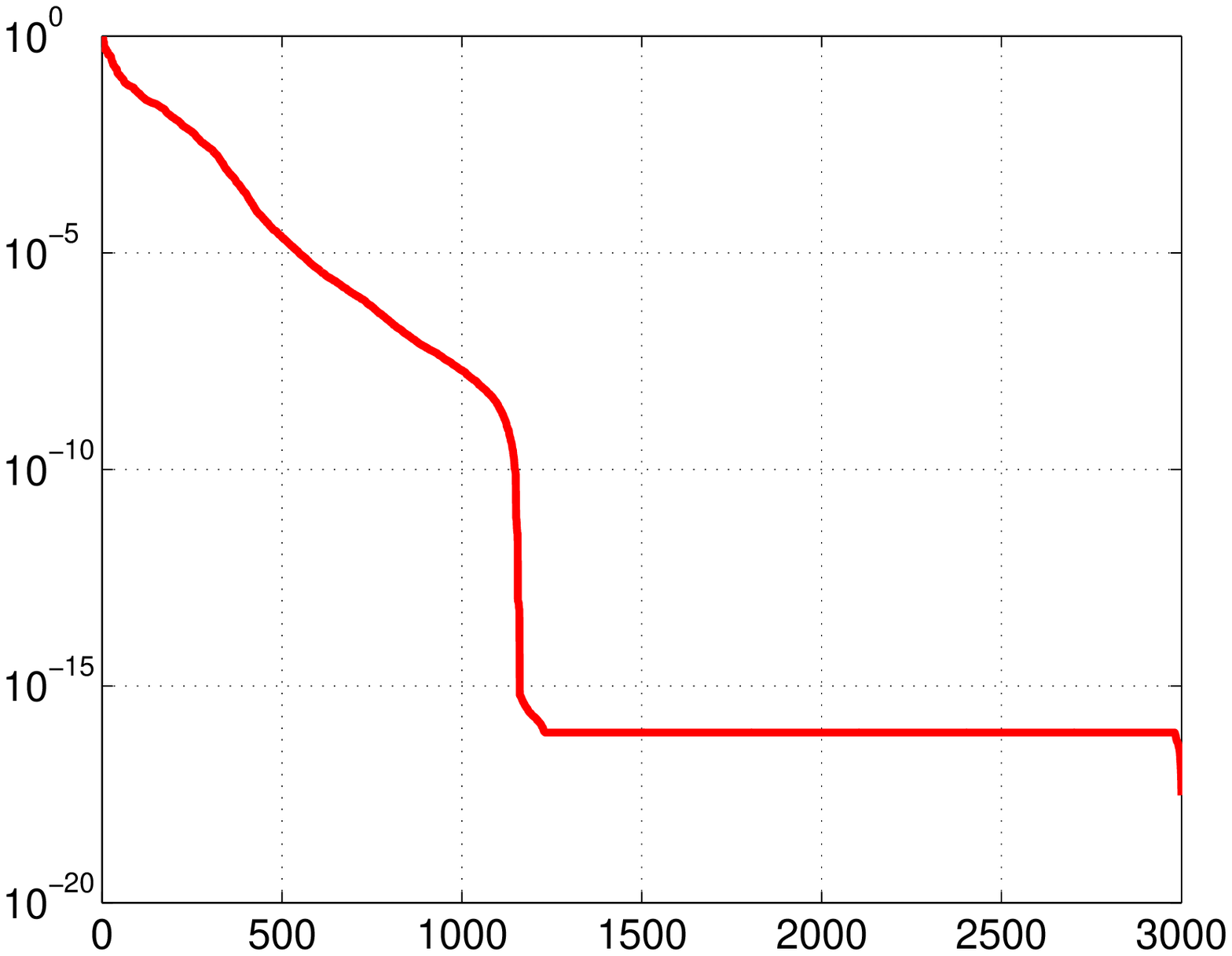}
\caption{\small Singular values of $V$ for several moderate size molecules.}
\label{fig:BSE_V_ranks}  
\end{figure}

Figure \ref{fig:BSE_V_ranks} represents the singular values of the matrices $V$ (red) and $B$ (blue)
for H$_2$O, N$_2$H$_4$, and C$_2$H$_5$NO$_2$ (Glycine amino acid)  molecules with the size
of the basis set $(N_b, N_{orb})$ equals to  
$(41, 9)$, $(82,9)$, and $(170,20)$, respectively. It can be seen that 
$R_V(\varepsilon)$ is linear proportional to $|\log \varepsilon|$ and it is of the same
order of magnitude as $R_B(\varepsilon)$.

These calculations are based on the reduced truncated SVD algorithm applied to 
the initial Cholesky decomposition of the matrix $V$ inherited from those for the TEI matrix $B$,
see Lemma \ref{lem:SVofV} and (\ref{eqn:V_SVD}).

\section{Block tensor factorization in BSE system matrix} \label{sec:FI_PostHF}

Here we discuss the main ingredients of the computational scheme for calculation of  blocks in
the BSE matrix and their reduced rank approximate representation.

\subsection{Tensor representations using TEI matrix in MTO basis} \label{ssec:DefMP2}

The construction of BSE matrix includes computation of several auxiliary quantities.
First, introduce a fourth order diagonal ''energy'' matrix by
\[
\boldsymbol{\Delta \varepsilon}=[\Delta \varepsilon_{ia,jb}]\in \mathbb{R}^{N_{ov}\times N_{ov}}: 
\quad \Delta \varepsilon_{ia,jb}= (\varepsilon_a - \varepsilon_i)\delta_{ij}\delta_{ab} ,
\]
that can be represented in the Kronecker product form
\[
\boldsymbol{\Delta \varepsilon} = 
I_{o}\otimes \mbox{diag}\{\varepsilon_a: a\in {\cal I}_v\} - 
\mbox{diag}\{\varepsilon_i: i\in {\cal I}_o\}\otimes I_{v},
\]
where $I_o$ and $I_v$ are the identity matrices on respective index sets.
It is worth to note that if the so called {\it homo lumo} gap of the system is positive,
i.e. 
$$
\varepsilon_a - \varepsilon_i > \delta >0,\quad a\in {\cal I}_v, i\in {\cal I}_o,
$$ 
then the matrix 
$\boldsymbol{\Delta \varepsilon}$ is invertible.

Using the  matrix $\boldsymbol{\Delta \varepsilon}$ 
and the $N_{ov}\times N_{ov}$ two-electron integrals matrix ${ V}=[v_{ia,jb}]$  
represented in the MO basis as in (\ref{eqn:TEI_Mol}),
the {\it dielectric function} ($N_{ov}\times N_{ov}$ matrix) $Z=[z_{pq,rs}]$ is defined by 
\[
 z_{pq,rs}:= \delta_{pr}\delta_{qs} - v_{pq,rs} [\boldsymbol{\chi}_0(\omega=0)]_{rs,rs},
\]
with $\boldsymbol{\chi}_0 (\omega)$ being the matrix form of the so-called Lehmann 
representation to the {\it response function}.
In turn, the matrix representation of $\boldsymbol{\chi}_0(\omega)$ inverse in known to
have a form
\[
\boldsymbol{\chi}_0^{-1}(\omega)= - 
\begin{pmatrix}
\boldsymbol{\Delta \varepsilon}   &  {\bf 0} \\ 
{\bf 0}  &  \boldsymbol{\Delta \varepsilon}  \\
\end{pmatrix} 
+ \omega 
\begin{pmatrix}
\boldsymbol{1}   &  {\bf 0} \\ 
{\bf 0}  &  -\boldsymbol{1}  \\
\end{pmatrix}, 
% \quad  \mbox{implying}  \quad 
% \boldsymbol{\chi}_0 (0) = -
% \begin{pmatrix}
% \boldsymbol{\Delta \varepsilon}^{-1}   &  {\bf 0} \\ 
% {\bf 0}  &  \boldsymbol{\Delta \varepsilon}^{-1}  \\
% \end{pmatrix}.
\]
implying
\[
\boldsymbol{\chi}_0 (0) = -
\begin{pmatrix}
\boldsymbol{\Delta \varepsilon}^{-1}   &  {\bf 0} \\ 
{\bf 0}  &  \boldsymbol{\Delta \varepsilon}^{-1}  \\
\end{pmatrix}. 
\]

Let ${\bf 1}\in \mathbb{R}^{N_{ov}}$ and 
${\bf d}_\varepsilon=\mbox{diag}\{\boldsymbol{\Delta \varepsilon}^{-1}\}\in \mathbb{R}^{N_{ov}}$
be the all-ones and diagonal vectors of $\boldsymbol{\Delta \varepsilon}^{-1}$, respectively,
specifying the rank-$1$ matrix ${\bf 1}\otimes {\bf d}_\varepsilon$.
In this notations the matrix $Z=[z_{pq,rs}]$ takes a compact form
\[
 Z = I_{o}\otimes I_{v} + V \odot \left( {\bf 1}\cdot {\bf d}_\varepsilon^T \right),
\]
where $\odot$ denotes the Hadamard product of matrices.
Introducing the inverse matrix $Z^{-1}$, we
finally, define the so-called {\it static screened interaction} matrix (tensor) by
\begin{equation} \label{eqn:BSE-Matr_W}
{ W}=[w_{pq,rs}]: \quad   
w_{pq,rs}:= \sum\limits_{t\in {\cal I}_{v},u\in {\cal I}_{o}} z^{-1}_{pq,tu} v_{tu,rs}.
\end{equation}
In the forthcoming calculations this equation should be considered on the conventional
and extended index sets
$\{p,s \in {\cal I}_{o}\} \cup \{ q,r \in {\cal I}_{v}\}$, and
$\{p,q \in {\cal I}_{o}\} \cup \{ r,s \in {\cal I}_{v}\}$, respectively,
corresponding to subtensors in (\ref{eqn:TEI_Mol_MP2}) and in (\ref{eqn:TEI_Mol_ext}).
%  In the first case, the matrix elements $v_{tu,rs}$ are represented by 
% (\ref{eqn:TEI_Mol}), while in the case of extended index set we use a modified  tensor
% \begin{equation} \label{eqn:TEI_Mol_extend}
% \overline{V}=[\overline{v}_{t u, rs}]: \quad \overline{v}_{tu rs}= 
% \sum\limits_{\mu, \nu, \lambda, \sigma =1}^{N_b} 
% C_{\mu t} C_{\nu u}  C_{\lambda r} C_{\sigma s} b_{\mu \nu,  \lambda \sigma}, \quad 
% r,s \in {\cal I}_{v},\quad t,u \in {\cal I}_{o}.
% \end{equation}
% 
% % where ${\cal I}_{o}:=\{1,...,N_{orb}\}$, ${\cal I}_{v}:=\{N_{orb}+1,...,N_{b}\}$, with $N_{orb}$ 
% % denoting the number of occupied orbitals.

Hence, on the conventional index set, we obtain the following matrix factorization 
of $W :=[w_{ia,jb}]$,
\[
 W = Z^{-1} V \quad \mbox{provided that} \quad a, b \in {\cal I}_{v},\quad i,j\in {\cal I}_{o}, 
\]
where $V$ is calculated by (\ref{eqn:TEI_Mol_MP2}),
while on the index set $\{p,q \in {\cal I}_{o}\} \cup \{ r,s \in {\cal I}_{v}\}$ the 
modified matrix ${W}=[{w}_{pq,rs}]$ is computed by (\ref{eqn:BSE-Matr_W})
and (\ref{eqn:TEI_Mol_ext}).

Now the matrix representation of the Bethe-Salpeter equation in the $(ov,vo)$ 
subspace reads as the following
eigenvalue problem determining  the excitation energies $\omega_n$:
\begin{equation} \label{eqn:BSE-GW1}
 %\Delta_1 = \frac{1}{h}
 F
 \begin{pmatrix}
 {\bf x}_n\\
{\bf y}_n\\
\end{pmatrix}
\equiv
\begin{pmatrix}
{ A}   &  { B} \\ 
{ B}^\ast  &  { A}^\ast  \\
\end{pmatrix} 
\begin{pmatrix}
 {\bf x}_n\\
{\bf y}_n\\
\end{pmatrix}
= \omega_n 
\begin{pmatrix}
{I}   &  { 0} \\ 
{ 0}  &  { -I}  \\
\end{pmatrix} 
\begin{pmatrix}
 {\bf x}_n\\ {\bf y}_n\\
\end{pmatrix},
\end{equation}
where the matrix blocks are defined in the index notation by
\[
 a_{ia,jb}:= \Delta \varepsilon_{ia,jb} + v_{ia,jb} - w_{ij,ab},
\]
\[
 b_{ia,jb}:=v_{ia,bj} - w_{ib,aj}, \quad a, b \in {\cal I}_{v},\quad i,j\in {\cal I}_{o}.
\]
In the matrix form we obtain
\[
 A=  \boldsymbol{\Delta \varepsilon} + V - \overline{W},
\]
where the matrix elements in $\overline{W}=[\overline{w}_{ia,jb}]$ are defined  by 
$\overline{w}_{ia,jb}=  {w}_{ij,ab} $, computed by  (\ref{eqn:BSE-Matr_W}) 
and (\ref{eqn:TEI_Mol_ext}).
Here the diagonal plus low-rank sparsity structure in $\boldsymbol{\Delta \varepsilon} + V$  
can be recognized  in view of Lemma \ref{lem:MP2.V}.  For the matrix block $B$ we have 
\[
B= \widetilde{V} - \widetilde{W} = {V} - \widetilde{W},
\]
where the matrix $\widetilde{V}$, corresponding to the partly transposed tensor, coincides with $V$, 
%Let ${\bf 1}$ denote the all-ones tensor, introduce the partly transposed tensor 
$$
\widetilde{\bf V}=[\widetilde{v}_{i a j b}]:=[v_{ia bj}] = [v_{ia jb}],
$$
and $\widetilde{W}$ is defined by permutation 
$\widetilde{W} = [ \widetilde{w}_{ia,jb} ]= [{w}_{ib,aj}] $.
In the following, we will investigate the reduced rank structure in the matrix blocks $A$ and $B$
resulted from the corresponding factorizations of $V$.

Solutions of equation (\ref{eqn:BSE-GW1}) come in pairs: excitation energies $\omega_n$ 
with eigenvectors $({\bf x}_n,{\bf y}_n)$, and de-excitation energies 
$-\omega_n$ with eigenvectors $({\bf x}_n^\ast,{\bf y}_n^\ast)$.

The block structure in matrices $A$ and $B$ is inherited from the symmetry of the 
TEI matrix $V$, $v_{ia,jb}= v^\ast_{ai,bj}$ and the matrix $W$, $w_{ia,jb}= w^\ast_{bj,ai}$.
In particular, it is well known from the literature that the matrix $A$ is Hermitian 
(since $v_{ia,jb}= v^\ast_{jb,ia}$ and $w_{ij,ab}= w^\ast_{ji,ba}$) and the matrix $B$ is symmetric 
(since $v_{ia,bj}= v_{jb,ai}$ and $w_{ib,aj}= w_{ja,bi}$).

In the following, we confine ourself by the case of the real spin orbitals, i.e. the 
matrices $A$ and $B$ remain real.
It is known that for the real spin orbitals and if 
${ A}   +  { B}$ and ${ A}   -  { B}$ are positive definite, the problem 
can be transformed into a half-size symmetric eigenvalue equation \cite{Cas_BSE:95}.
Indeed, in this case for every eigenpair we have,
\[
 A {\bf x} + B {\bf y} = \omega {\bf x}, \quad B {\bf x} + A {\bf y}= - \omega {\bf y},
\]
implying 
\[
(A+B)({\bf x}+{\bf y}) = \omega ({\bf x} - {\bf y}), \quad (A-B)({\bf x}-{\bf y}) = \omega ({\bf x} + {\bf y}).
% (-B^2 + (A - \omega I)A){\bf y} = - \omega {\bf y}, \quad \mbox{i.e.}  \quad (-B^2 + A^2){\bf y} = \omega^2 {\bf y}.
\]
%Then the positive definiteness $-B^2 + A^2 \geq 0$ ensures the solvability of the reduced eigenvalue problem.
Now, if ${ A}   +  { B}$ and ${ A}   -  { B}$ are both positive definite, 
then the previous equations transform to
\begin{equation} \label{eqn:redBSE}
 M {\bf z} = \omega^2 {\bf z}\quad \mbox{with}  \quad M= (A - B)^{1/2}(A+B)(A - B)^{1/2},
\end{equation}
with respect to the normalized eigenvectors ${\bf z}= \sqrt{\omega}(A - B)^{1/2}({\bf x}+{\bf y}) $.
However, in this case the computation of the large fully populated matrix $(A - B)^{1/2}$ may become
the bottleneck.

The dimension of the matrix in (\ref{eqn:BSE-GW1}) is $2 N_o N_v \times 2 N_o N_v$, where $N_o$ and $N_v$ denote 
the number of occupied and virtual orbitals, respectively.  
In general, $N_o N_v$ is asymptotically of the size  $O(N_b^2)$, i.e. the spectral problem (\ref{eqn:BSE-GW1})
becomes computationally extensive already for moderate size molecules with $N_b \approx 100$.
Indeed, the direct eigenvalue solver for (\ref{eqn:BSE-GW1}) (diagonalization) becomes non-acceptable 
due to complexity scaling $O(N_b^{6})$.
Furthermore, the numerical calculation of the matrix elements, 
based on the precomputed TEI integrals from the Hartree-Fock equation, 
has the numerical cost that scales as $O(N_b^3) - O(N_b^5)$ depending on how to
compute the matrix $W$. Here again we propose to adapt the low-rank structure in the matrix $V$.

The most challenging computations arise in the case of lattice structured compounds, where the number of 
basis functions increases proportionally to the lattice size $L\times L \times L$, i.e. $N_b \sim n_0 L^3$,
that quickly leads to intractable problems even for small lattices.

\subsection{Eigenvalues in an interval by the low-rank approximation} \label{ssec:Factor_BSE}

The large matrix size in equation (\ref{eqn:BSE-GW1}) makes the solution of full eigenvalue problem
computationally intractable even for moderate size molecules, not saying for lattice structured compounds.
Hence, in realistic quantum chemical calculations of excitation energies the computation of several tens 
eigenpairs may be sufficient. 
Methods for solving partial eigenvalue problems and matrix inversion for large matrices 
with special sparsity pattern have been intensively studied in the literature, see for example, 
\cite{NaPoSaad:13}, \cite{LinLinetc_SelInv:11} and references therein. 

\subsubsection{The reduced basis approach by low-rank approximation} \label{sssec:Reduced_BSE}

In what following we show that the part $\boldsymbol{\Delta \varepsilon} + V$ in the matrix block $A$  
has the diagonal plus low-rank (DPLR) structure, while the sub-matrix $\widetilde{V}$ in 
the block $B$ exhibits the low-rank approximation. 
Taking into account these structures we propose the special  partial eigenvalue problems solver 
based on the use of reduced basis set obtained from as the eigenvectors of the reduced matrix that
picks up only the essential part of the initial BSE matrix with the DPLR structure. 
The iterative solver is based on fast matrix-vectors multiplication and efficient storage of 
all data involved into the computational scheme.
Using the reduced basis we than solve the initial problem by the Galerkin projection 
onto the reduced basis of moderate size.

Another direction that includes the QTT analysis
of the matrices involved in order to perform the fast matrix calculations in the QTT format will
be considered elsewhere.

We begin from the low-rank decomposition of the matrix $V$,
\[
 V \approx L_V L_V^T,\quad L_V\in \mathbb{R}^{N_{ov} \times R_V}, \quad R_V \leq R_B. 
\]
where the rank parameter $R_V =R_V(\varepsilon) =O(N_b |\log \varepsilon |)$ 
can be optimized depending on the truncation error $\varepsilon >0$ (see \cite{VeKhorMP2:13} 
and \S\ref{ssec:TEI_MP2_orb}).

First, we represent all matrix blocks and intermediate matrices included in the 
representation of the BSE matrix by using the above decomposition 
and diagonal matrices as follows. 
% Clearly, the matrix $\widetilde{V}$ allows factorization 
% \[
%  V \approx L_V \widetilde{L}_V^T,\quad L_V\in \mathbb{R}^{N_{ov} \times R_V}. %\quad R_V \leq R_B. 
% \]
The properties of the Hadamard product imply that the matrix $Z$ exhibits the representation
\[
 Z = I_{o}\otimes I_{v} +  L_V L_V^T\odot \left( {\bf 1}\cdot {\bf d}_\varepsilon^T \right)
= I_{N_{ov}} + L_V (L_V \odot {\bf d}_\varepsilon)^T,
\]
where the rank of the second summand does not exceed $R_V$. Hence the matrix inversion $Z^{-1}$
within the calculation of $Z^{-1} V$
can be computed by special algorithm applied to the DPLR structure. 
The alternative way to compute the product $X= Z^{-1} V$ is the iterative solution of the matrix 
equation 
\begin{equation} \label{eqn:Comp_Zm1}
 Z X = V, \quad \mbox{with} \quad V=L_V {L}_V^T,
\end{equation}
and with the DPLR matrix $Z$. The above matrix equation  can be solved
in a low-rank format by using preconditioned iteration with rank truncation.

The computational cost for setting up the full BSE matrix $F$ in 
(\ref{eqn:BSE-GW1}) can be estimated by $O({N^3_{ov}})$, which includes the cost 
$O({N_{ov}}R_B)$ for 
generation of the matrix $V$ and the dominating cost $O({N^2_{ov}} N_{orb})$ for setting up of $W$.

In the following,  we rewrite spectral problem (\ref{eqn:BSE-GW1}) in the equivalent form
\begin{equation} \label{eqn:BSE-F1}
 %\Delta_1 = \frac{1}{h}
 F_1
 \begin{pmatrix}
 {\bf x}_n\\
{\bf y}_n\\
\end{pmatrix}
\equiv
\begin{pmatrix}
{ A}   &  { B} \\ 
-{ B}^\ast  &  -{ A}^\ast  \\
\end{pmatrix} 
\begin{pmatrix}
 {\bf x}_n\\
{\bf y}_n\\
\end{pmatrix}
= \omega_n 
% \begin{pmatrix}
% {I}   &  { 0} \\ 
% { 0}  &  { -I}  \\
% \end{pmatrix} 
\begin{pmatrix}
 {\bf x}_n\\ {\bf y}_n\\
\end{pmatrix}.
\end{equation}

The main idea of the {\it reduced basis approach} proposed in this paper is as follows.
Instead of solving the partial eigenvalue problem for finding of, say, 
$m_0$ eigenpairs in equation (\ref{eqn:BSE-F1}), we, first, solve the slightly 
simplified auxiliary spectral problem with a modified matrix $F_0$ 
obtained from $F_1$ by low-rank approximation of $\overline{W}$ and $\widetilde{W}$ from
the matrix blocks $A$ and $B$, respectively, i.e. by transforms
\begin{equation} \label{eqn:BSE-Reduce_Block}
A\mapsto A_0:=\boldsymbol{\Delta \varepsilon} + V -  \overline{W}_r \quad \mbox{and} 
\quad B \mapsto B_0: = {V} -  \widetilde{W}_r.
\end{equation}
Here we assume that the matrix $V$ is already presented in low-rank format, inherited 
from the Cholesky decomposition of TEI matrix.

% Notice that the matrix $\overline{W}$  might have rather large $\varepsilon$-rank for small values 
% of $\varepsilon$ which increases the cost of high accuracy solutions.
% In that case we apply another approximation strategy in which this matrix remains
% unchanged.  
% 
% % \begin{figure}[htbp]
% % \centering
% % \includegraphics[width=5.2cm]{BSE_V_svd_H2O.eps}    %{BSE_V_svd_H2O.eps}
% % \includegraphics[width=5.2cm]{BSE_V_svd_N4H4.eps}
% % \includegraphics[width=5.2cm]{BSE_V_svd_glycin.eps}
% % \caption{\small Singular values of $V$ for several moderate size molecules.}
% % \label{fig:BSE_V_ranks}  
% % \end{figure}
% 
% The results of numerical tests that follow (see Table \ref{table_e_ranks}) indicate that the 
% rank approximation to the matrix $\overline{W}$ with the moderate rank parameter allows for the 
% numerical error in the excitation energies of the order of few percents. 
% Further improvement of the accuracy requires substantial increase in the computational costs.

The modified auxiliary problem reads
\begin{equation} \label{eqn:BSE-Reduced}
 F_0
 \begin{pmatrix}
 {\bf u}_n\\
{\bf v}_n\\
\end{pmatrix}\equiv
\begin{pmatrix}
{ A}_0   &  { B}_0 \\ 
-{B}_0^\ast  &  -{A}_0^\ast  \\
\end{pmatrix} 
\begin{pmatrix}
 {\bf u}_n\\
{\bf v}_n\\
\end{pmatrix}
= \lambda_n 
% \begin{pmatrix}
% {I}   &  { 0} \\ 
% { 0}  &  { -I}  \\
% \end{pmatrix} 
\begin{pmatrix}
 {\bf u}_n\\ {\bf v}_n\\
\end{pmatrix}.
\end{equation} 
This eigenvalue problem is much simpler than those in (\ref{eqn:BSE-GW1}) since
now the matrix blocks $A_0$ and $B_0$ are composed by diagonal and low-rank matrices.

Having at hand the set of $m_0 $  eigenpairs computed for the modified (reduced model)
problem (\ref{eqn:BSE-Reduced}), 
$\{(\lambda_n, \psi_n)=(\lambda_n,({\bf u}_n,{\bf v}_n)^T)\}$, we solve the full eigenvalue
problem for the reduced matrix obtained by projection of the initial equation onto the 
problem adapted small basis set $\{\psi_n\}$ of size $m_0$.

Define a matrix $G_1 = \psi_n(:,1:m_0)\in\mathbb{R}^{2N_{ov}\times m_0}$ whose columns present
the vectors of reduced basis, 
compute the Galerkin  and mass matrices by projection onto the reduced basis specified by columns in $G_1$, 
\[
M_1= G_1^T F_1 G_1, \quad S_1 = G_1^T G_1 \in \mathbb{R}^{m_0\times m_0},
\]
and then solve the projected generalized eigenvalue problem of small size $m_0\times m_0$,
\begin{equation} \label{eqn:BSE-Red-Galerk}
 M_1 Y = \gamma_n S_1 Y, \quad Y \in \mathbb{R}^{m_0 }.
\end{equation}
The portion of small eigenvalues $\gamma_n$, $n=1,...,m_0$, 
is thought to be very close to the corresponding excitation energies 
$\omega_n$, ($n=1,...,m_0$) in the initial spectral  problem (\ref{eqn:BSE-GW1}).
Table \ref{table_e_Nred} illustrates that the large the size of reduced basis  $m_0$,
the better the accuracy of the lowest excitation energy $\gamma_1$.
\begin{center}%
\begin{table}[htb]
\begin{center}
\begin{tabular}
[c]{|c|c|r|r|r|r|r|}%
\hline
 $m_0$     &   $5$    & $10$     & $20$     & $30$ & $40$    & $50$   \\
\hline\hline
H$_2$ O   &  $0.025$   & $0.025$  & $0.14$ & $0.01$   & $0.01$     & $0.005$  \\
\hline
N$_2$H$_4$ &  $0.02$   & $0.02$  & $0.015$ & $0.015$  & $0.015$     & $0.005$  \\
\hline\hline
 \end{tabular}
\end{center}
\caption{The error $|\gamma_1 - \omega_1|$ vs. the size of reduced basis, $m_0$.}
\label{table_e_Nred}
\end{table}
\end{center}

\begin{remark} \label{rem:Rank_W}
Notice that the matrix $\overline{W}$  might have rather large $\varepsilon$-rank for small values 
of $\varepsilon$ which increases the cost of high accuracy solutions.
The results of numerical tests that follow (see Table \ref{table_e_ranks}) indicate that the 
rank approximation to the matrix $\overline{W}$ with the moderate rank parameter allows for the 
numerical error in the excitation energies of the order of few percents. 
Further improvement of the accuracy requires noticeable increase in the computational costs.
To avoid this numerical payoff, we apply another approximation strategy in which 
the matrix $\overline{W}$ remains unchanged, while matrices $V$ and $\widetilde{W}$
are substituted by their low-rank approximation (see Figure \ref{fig:BSE_NH3_reduced}).
\end{remark}

Matrix blocks in the auxiliary equation (\ref{eqn:BSE-Reduced}) are obtained by rather rough $\varepsilon$-rank
approximation to the initial system matrix. However, we observe much better approximations $\gamma_n$
from (\ref{eqn:BSE-Red-Galerk}) to the exact excitation energies $\omega_n$ from 
the equation (\ref{eqn:BSE-GW1}). 
This can be explained 
by the well known effect of the quadratic error behavior of eigenvalues with respect to 
the perturbation error in the symmetric matrix.
In the situation with equation (\ref{eqn:redBSE}) the corresponding statement can 
be easily proved under mild assumptions.

\begin{lemma}\label{lem:ErrorLam}
Let matrices $A$ and $B$ be real and both $A-B $ and $A+B $ be symmetric, positive definite.
Suppose that the matrices in the  system (\ref{eqn:redBSE}) are perturbed by 
$A\mapsto \widetilde{A}:=A + \Delta A$, $B\mapsto \widetilde{B}:= B+ \Delta B$, such that 
$\|\Delta A\| \leq \varepsilon $ and $\|\Delta B\| \leq \varepsilon $.
Then the error in the excitation energies, 
 $\Delta \omega_n = \omega_n - \widetilde{\omega}_n$,   is estimated by
\[
 |\Delta \omega_n | \leq C \varepsilon^2,
\]
provided that $ |2 \omega_n + \Delta \omega_n|\geq  \delta >0 $ uniformly in $\varepsilon $.
\end{lemma}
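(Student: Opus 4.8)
The plan is to descend from the excitation energies $\omega_n$ to the squared energies $\omega_n^2$, which are genuine eigenvalues of a symmetric positive definite matrix, and then to extract the extra power of $\varepsilon$ from the variational (Rayleigh--Ritz) characterization of those eigenvalues: the stationarity of the Rayleigh quotient at an eigenvector is precisely what turns a linear eigenvector error into a quadratic eigenvalue error.

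First I would fix the algebraic setting underlying (\ref{eqn:redBSE}). Since $A-B$ and $A+B$ are symmetric positive definite, the matrix $M=(A-B)^{1/2}(A+B)(A-B)^{1/2}$ is symmetric positive definite as well (congruence by the invertible factor $(A-B)^{1/2}$ preserves positive definiteness), so its eigenvalues are precisely $\omega_n^2>0$ with an associated orthonormal eigenbasis $\{{\bf z}_n\}$. The perturbed data $\widetilde A,\widetilde B$ yield in the same manner a symmetric positive definite $\widetilde M$ with unit eigenvectors $\widetilde{\bf z}_n$. Using $\|\Delta A\|,\|\Delta B\|\le\varepsilon$ together with the Lipschitz continuity of the matrix square root on matrices bounded below (secured here by the positivity of the homo--lumo gap, which keeps $A-B$ uniformly positive definite), I would record the linear bound $\|\widetilde M-M\|\le C\varepsilon$, and, under the mild assumption that $\omega_n^2$ is a simple eigenvalue of $M$ with spectral gap $g>0$, the first--order eigenvector estimate $\|\widetilde{\bf z}_n-{\bf z}_n\|\le C\varepsilon/g$.

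The heart of the argument is the quadratic bound $|\omega_n^2-\widetilde\omega_n^2|\le C\varepsilon^2$. Consistently with the reduced--basis construction (\ref{eqn:BSE-Red-Galerk}), the perturbed squared energy is the Rayleigh quotient of the \emph{exact} operator $M$ on the perturbed eigendirection, $\widetilde\omega_n^2=\widetilde{\bf z}_n^{\,T} M\,\widetilde{\bf z}_n$. Expanding the unit vector $\widetilde{\bf z}_n$ against the eigenbasis of $M$ as $\widetilde{\bf z}_n={\bf z}_n\cos\theta_n+{\bf w}_n\sin\theta_n$, with ${\bf w}_n\perp{\bf z}_n$, $\|{\bf w}_n\|=1$ and $\sin\theta_n=O(\varepsilon)$, the cross term vanishes by orthogonality and I obtain
\[
\widetilde\omega_n^2=\omega_n^2\cos^2\theta_n+{\bf w}_n^{\,T}M{\bf w}_n\sin^2\theta_n=\omega_n^2+\bigl({\bf w}_n^{\,T}M{\bf w}_n-\omega_n^2\bigr)\sin^2\theta_n.
\]
The linear contribution has cancelled, so $|\omega_n^2-\widetilde\omega_n^2|\le \|M\|\,\sin^2\theta_n\le C\varepsilon^2$; this is exactly the quadratic error behaviour of the Rayleigh quotient at an eigenvector of a symmetric matrix.

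It remains to transfer the bound from $\omega_n^2$ back to $\omega_n$ through the factorization
\[
|\omega_n^2-\widetilde\omega_n^2|=|\omega_n-\widetilde\omega_n|\,|\omega_n+\widetilde\omega_n|=|\Delta\omega_n|\,|2\omega_n+\Delta\omega_n|.
\]
Since the hypothesis provides $|2\omega_n+\Delta\omega_n|\ge\delta>0$ uniformly in $\varepsilon$, division gives $|\Delta\omega_n|\le|\omega_n^2-\widetilde\omega_n^2|/\delta\le C\varepsilon^2/\delta$, which is the asserted estimate. The step I expect to be the main obstacle is the quadratic bound itself: a direct perturbation of the eigenvalue $\omega_n^2$ is only linear in $\varepsilon$, and the gain of one order is genuinely a property of the Galerkin/Rayleigh--Ritz value rather than of the raw perturbed eigenvalue. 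The argument therefore hinges on (i) reading $\widetilde\omega_n$ through the Rayleigh quotient of the exact operator, as in (\ref{eqn:BSE-Red-Galerk}), and (ii) the simple--eigenvalue/spectral--gap assumption that yields the $O(\varepsilon)$ eigenvector perturbation, with the positive definiteness of $A\pm B$ entering precisely where the square roots and the Rayleigh quotient must be well defined.
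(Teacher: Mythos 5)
Your proof is correct and, in its endgame, coincides exactly with the paper's: the paper also passes to the symmetric squared problem (\ref{eqn:redBSE}), factorizes $|\omega_n^2-\widetilde{\omega}_n^2|=|\lambda_n-\widetilde{\lambda}_n|$, and divides by the assumed lower bound $\delta$ on $|\omega_n+\widetilde{\omega}_n|$. The genuine difference lies in the central quadratic estimate. The paper's proof is two lines long: it asserts $|\lambda_n-\widetilde{\lambda}_n|\leq C\varepsilon^2$ outright, ``since this problem is symmetric,'' appealing to the well-known quadratic error behavior mentioned in the preceding paragraph, with no gap assumption and no mechanism supplied. You instead derive the bound via (i) a Lipschitz estimate $\|\widetilde{M}-M\|\leq C\varepsilon$ for the congruence-transformed matrices, (ii) a Davis--Kahan-type eigenvector bound $\|\widetilde{\bf z}_n-{\bf z}_n\|=O(\varepsilon/g)$ under a simplicity/spectral-gap assumption, and (iii) stationarity of the Rayleigh quotient of the exact $M$ at ${\bf z}_n$, which kills the first-order term. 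This buys something real, and you correctly identify the sensitive point: under the \emph{literal} hypothesis of the lemma ($\widetilde{\omega}_n^2$ an eigenvalue of the perturbed matrix $\widetilde{M}$), the first-order term ${\bf z}_n^T(\widetilde{M}-M){\bf z}_n$ does not vanish and Weyl's inequality gives only $O(\varepsilon)$; for instance $A-B=I$, $\Delta A=\varepsilon I$, $\Delta B=0$ shifts every $\omega_n^2$ by roughly $\varepsilon(\omega_n^2+1)$. The quadratic gain is genuinely a property of Ritz values of the exact operator on a perturbed subspace --- which is how the lemma is actually deployed in the paper, for the Galerkin values $\gamma_n$ of (\ref{eqn:BSE-Red-Galerk}) --- and your identification $\widetilde{\omega}_n^2=\widetilde{\bf z}_n^{\,T}M\,\widetilde{\bf z}_n$ is precisely the reinterpretation under which the statement holds, at the modest cost of the extra simple-eigenvalue/gap hypothesis that the paper omits. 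In short: same skeleton, but you substantiate the one step the paper merely asserts, and your caveat about the raw perturbed eigenvalue is well placed rather than a defect of your argument.
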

\begin{proof}
 Denote by $\lambda_n = \omega_n^2$ and $\widetilde{\lambda}_n = \widetilde{\omega}_n^2$ 
 the exact and perturbed
eigenvalues in the  transformed  problem (\ref{eqn:redBSE}). This problem is symmetric, hence we have
\[
|\omega_n^2 - \widetilde{\omega}_n^2 | = | \lambda_n - \widetilde{\lambda}_n |  \leq C \varepsilon^2,
\]
implying
\[
 |\omega_n - \widetilde{\omega}_n | \leq \frac{C}{|\omega_n + \widetilde{\omega}_n |} \varepsilon^2,
\]
which proves the statement.
\end{proof}

In the particular BSE formulation based on the Hartree-Fock molecular orbitals basis, we have
the slightly perturbed symmetry in the matrix blocks, i.e. Lemma \ref{lem:ErrorLam} 
does not apply directly.
However, we observe the same quadratic error decay in all numerical experiments 
implemented so far.

\subsubsection{Numerics for the reduced basis methods} \label{sssec:Reduced_BSE_numer}

In this section we present numerical illustrations to the reduced basis
approach for the BSE problem,
which use the TEI tensor and molecular orbitals obtained from the solution 
of the Hartree-Fock equation
by the 3D grid-based tensor-structured method \cite{vekh:13,VeKhorMP2:13}.
All examples below utilize the grid representation of the Galerkin basis 
functions from Gaussian basis sets of type cc-pDVZ.
% the numerical solution of the Hartree-Fock equation by using the 3D grid-based
% tensor-structured solver. 
%The core Hamiltonian part of the Fock operator is taken from Molpro.
The two-electron integrals are computed in the form of low-rank Cholesky factorization
by tensor-structured algorithms
incorporating 1D density fitting \cite{VeKhBoKhSchn:12}.
% One of the outputs of this solver is the Cholesky factor representing the 
% two-electron integrals tensor which is used for further BSE approximations.  

In the following numerical tests we demonstrate on the examples of moderate size molecules that
a small reduced basis set, obtained by separable approximation
with the rank parameters of about several tens, allows to reveal several lowest
excitation energies and respective excited states with the accuracy about $0.1$eV - $0.02$eV
 depending on the rank-truncation strategy. Table \ref{tab:BasisParam} represents
the size of GTO basis set, $N_b$, and the number of molecular orbitals, $N_{orb}$, in numerical 
examples considered below.

\begin{center}%
\begin{table}[htb]
\begin{center}
\begin{tabular}
[c]{|c|c|r|r|r|r|}%
\hline
 &          H$_2$O  & H$_2$O$_2$ & N$_2$H$_4$ & C$_2$H$_5$OH & C$_2$H$_5$NO$_2$   \\
\hline\hline
$N_{orb}$, $N_b$ & $5$, $41$ & $9$, $68$ & $9$, $82$& $13$, $123$ & $20$, $170$ \\
\hline
 \end{tabular}
\end{center}
\caption{GTO basis set size $N_b$ and number of molecular orbitals, $N_{orb}$, in considered examples.}
\label{tab:BasisParam}
\end{table}
\end{center} 

\begin{center}%
\begin{table}[htb]
\begin{center}
\begin{tabular}
[c]{|c|c|r|r|r|r|}%
\hline
& $\varepsilon$ & $4\cdot 10^{-1}$  & $2\cdot 10^{-1}$ & $10^{-1}$   &  $10^{-2}$          \\
\hline\hline
H$_2$O   &  $|\gamma_1 - \omega_1|$ & $10^{-2}$  & $10^{-2}$ & $8\cdot 10^{-3}$ & $8\cdot 10^{-6}$    \\
\cline{2-6}
          &$|\lambda_1 - \omega_1|$ & $0.35$ & $0.35$ & $0.31$ & $4.4\cdot 10^{-4}$  \\
\cline{2-6}
          & $\|F_1 -F_0\|$ & $1.17$ &  $1.17$ & $8\cdot 10^{-1}$ & $2\cdot 10^{-2}$     \\
\cline{2-6}
   & \footnotesize{ranks $V$,$\overline{W}$,$\widetilde{W}$} & $6$, $9$, $6$ & $13$, $13$, $10$
   & $25$, $72$, $36$ & $60$, $180$, $92$   \\
\hline\hline
N$_2$H$_4$ & $|\gamma_1 - \omega_1|$ & $1.4\cdot 10^{-2}$ & $1.5\cdot 10^{-2}$ & $10^{-2}$
& $6\cdot 10^{-6}$    \\
\cline{2-6}
          &$|\lambda_1 - \omega_1|$ &$0.25$ & $0.25$ &$0.25$ &  $2.8\cdot 10^{-4}$     \\
 \cline{2-6}
          & $\|F_1 -F_0\|$ & $1.8$ &  $1.0$     & $6\cdot 10^{-1}$ & $1.4\cdot 10^{-2}$     \\
\cline{2-6}
& \footnotesize{ranks $V$,$\overline{W}$,$\widetilde{W}$} & $11$, $17$, $11$ & $26$, $25$, $15$ &
$49$, $144$, $54$ & $117$, $657$, $196$    \\
\hline \hline
\small{C$_2$H$_5$OH} & $|\gamma_1 - \omega_1|$ & $3\cdot 10^{-2}$ & $3\cdot 10^{-2}$
& $1.5\cdot 10^{-2}$ & $6\cdot 10^{-6}$    \\
\cline{2-6}
          &$|\lambda_1 - \omega_1|$ &$0.29$ & $0.29$ & $0.27$ & $3\cdot 10^{-4}$  \\
 \cline{2-6}
& $\|F_1 -F_0\|$ & $3.0$ &  $1.5$   & $7\cdot 10^{-1}$ & $1.4\cdot 10^{-2}$ \\
\cline{2-6}
& \footnotesize{ranks $V$,$\overline{W}$,$\widetilde{W}$} & $16$, $17$, $14$ & $39$, $29$, $20$ & $71$, $105$, $74$ & $171$, $1430$, $296$    \\
\hline \hline
\end{tabular}
\end{center}
\caption{Accuracy for 1-st eigenvalue, $|\gamma_1 - \omega_1|$, and norms of the difference between the
exact and reduced-rank
matrices, $\|F_1 -F_0\|$, vs. $\varepsilon$-rank for $V$, $\overline{W}$ and $\widetilde{W}$.
The BSE matrix size is given in brackets:
H$_2$O ($360\times 360$), N$_2$H$_4$ ($1430 \times 1430$), C$_2$H$_5$OH ($2860\times 2860$).
}
\label{table_e_ranks}
\end{table}
\end{center}

Table \ref{table_e_ranks} demonstrates the quadratic decay of the error $|\gamma_1 - \omega_1|$
in the lowest excitation energy with respect to the approximation error to the initial BSE matrix, 
which is controlled by a tolerance $\varepsilon$ in the rank  truncation procedure
applied to  the BSE submatrices $V$, $\overline{W}$ and $\widetilde{W}$. 
The resulting $\varepsilon$-ranks for the corresponding matrices 
are presented for H$_2$O, N$_2$H$_4$ and C$_2$H$_5$OH molecules.
 The error for $1$st eigenvalue, $|\gamma_1 - \omega_1|$, is given in Hartree (one Hartree corresponds
to $27$eV). This table demonstrates that the error in the reduced basis approximation, $|\gamma_1 - \omega_1|$,
is at least one order of magnitude smaller than those for simplified problem, $|\lambda_1 - \omega_1|$,
which motivates the use of the reduced basis equation (\ref{eqn:BSE-Red-Galerk}). 

This effect can be also seen in Figure \ref{fig:BSE_N2H4_reduced} 
demonstrating the convergence $\gamma_n \to \omega_n$ and $\lambda_n \to \omega_n$ 
with respect to the increasing rank parameter 
determining the auxiliary problem (the size of reduced basis is $m_0=30$).
\begin{figure}[htbp]
\centering
\includegraphics[width=5.4cm]{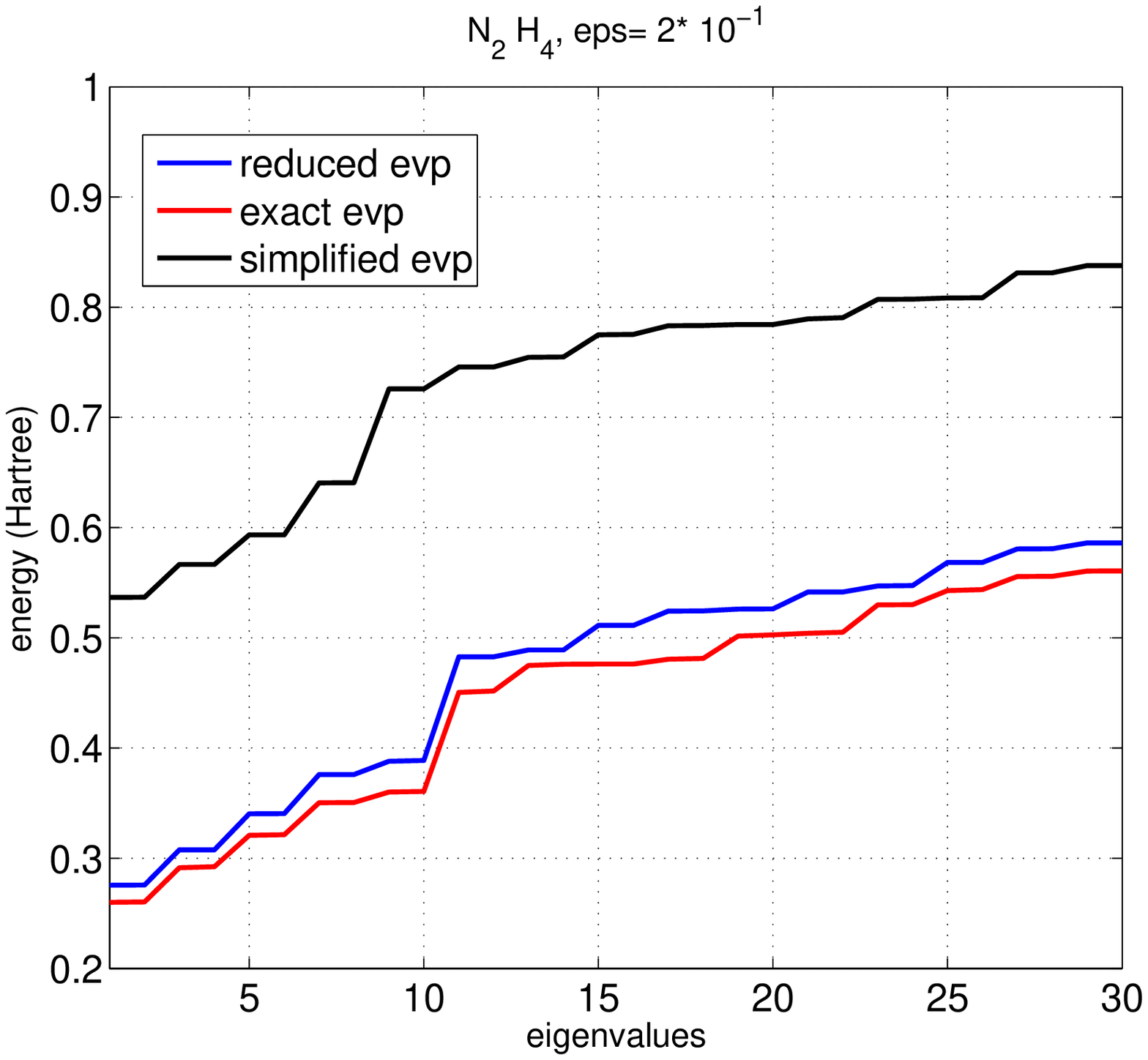}    
\includegraphics[width=5.4cm]{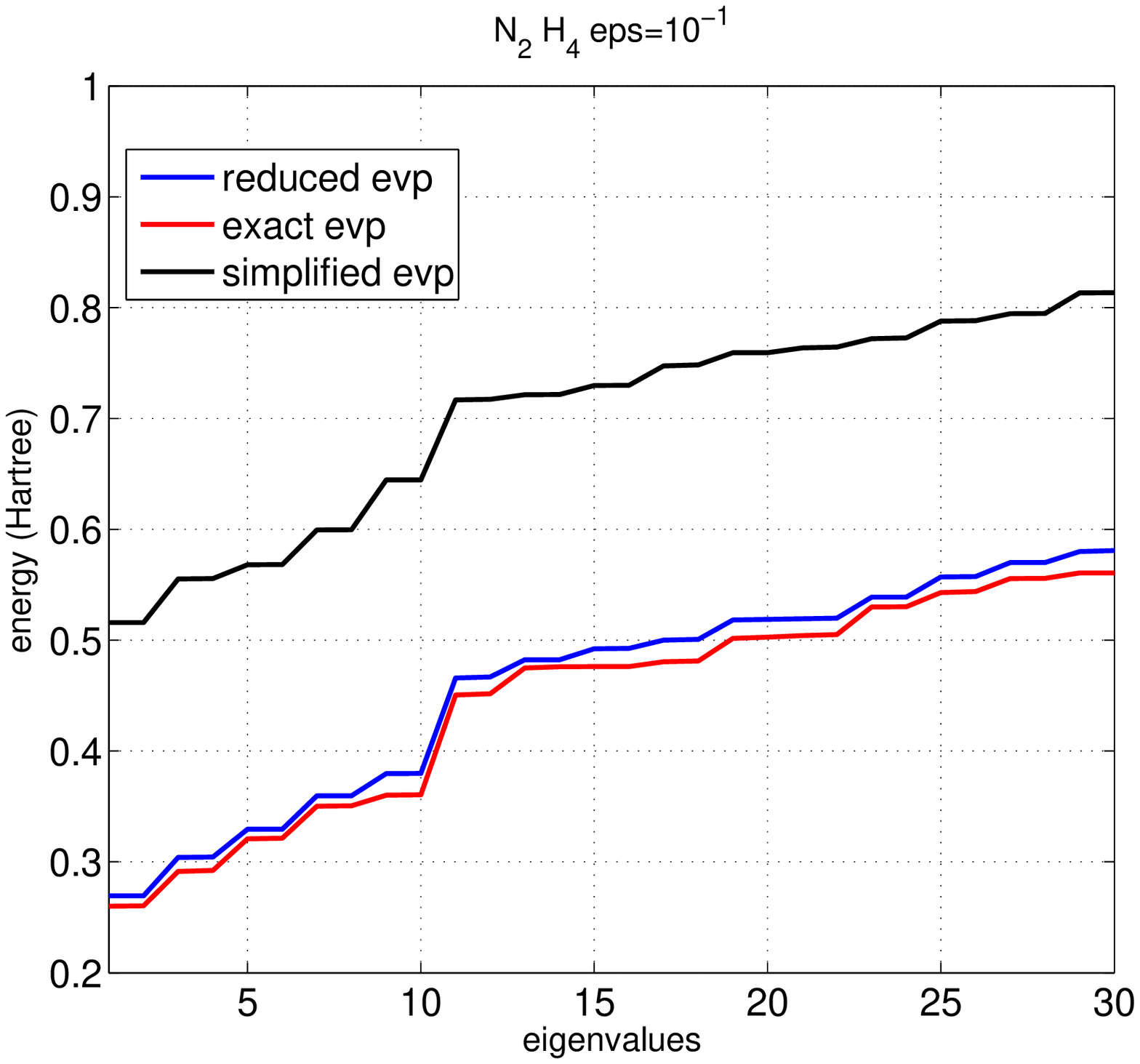}
\includegraphics[width=5.4cm]{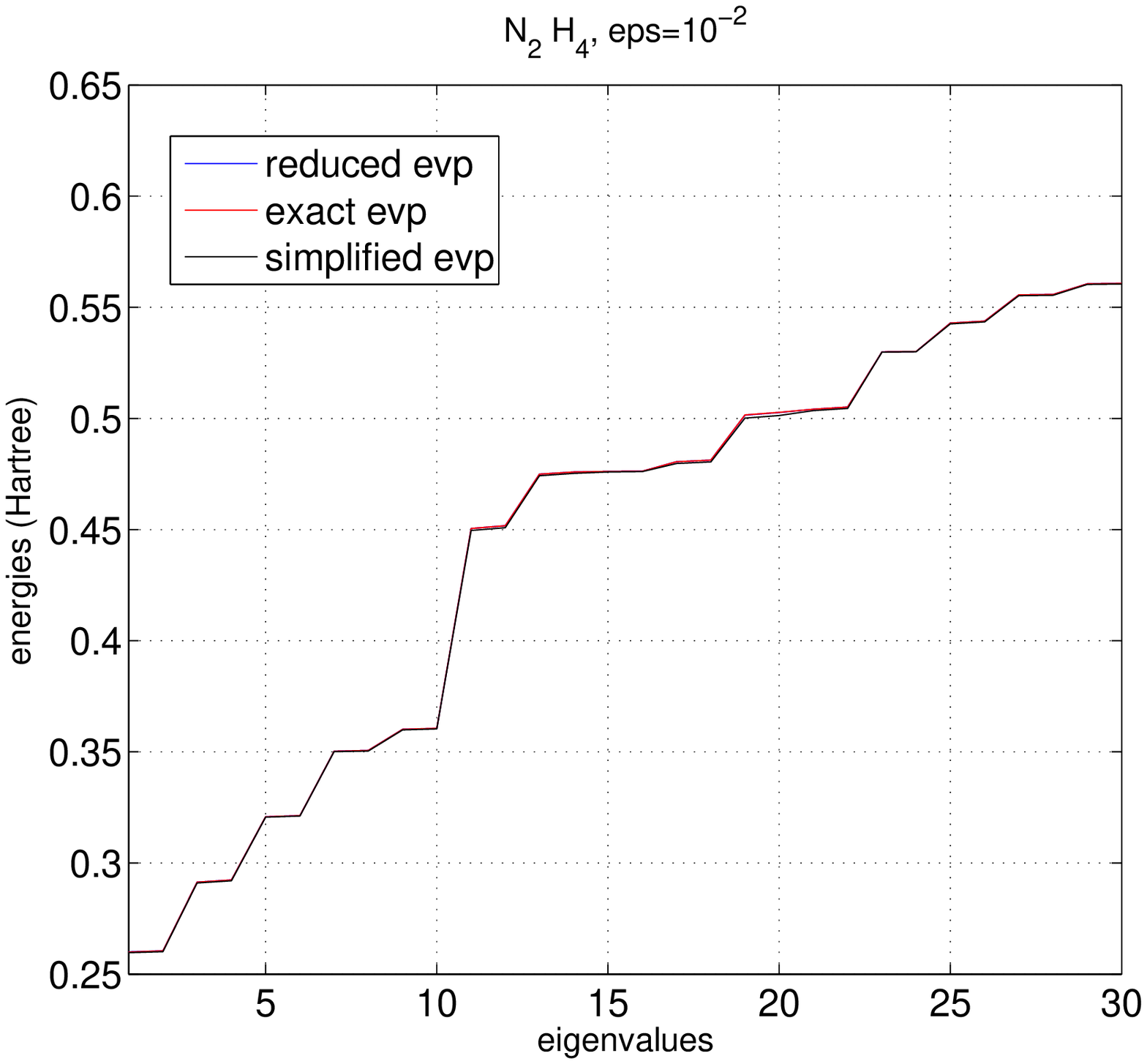}
\caption{\small Comparison of $m_0=30$ lower eigenvalues for the reduced and exact BSE 
systems vs. $\varepsilon$ in the case of N$_2$H$_4$ molecule.}
\label{fig:BSE_N2H4_reduced}  
\end{figure}
It confirms the numerical observation (see also Table \ref{table_e_ranks}) that 
\[
 |\gamma_n - \omega_n|\ll |\lambda_n - \omega_n|,
\]
that justifies the efficiency of the reduced basis approach.
Three figures from the left to the right correspond to the rank truncation threshold
$\varepsilon\in \{2\cdot 10^{-1}$, $10^{-1} $, $10^{-2}\}$. 
The quantities $\lambda_n$, $\gamma_n$ and $\omega_n$ are marked by black, blue and red lines,
respectively.
Notice that for $\varepsilon= 10^{-2}$ the energies (eigenvalues) for the initial and reduced systems  
are practically coinciding (error of the order of $10^{-6}$), 
at the expense of large separation rank, see Table \ref{table_e_ranks}.

Figure \ref{fig:BSE_Glycine_reduced} represents similar data as in Figures \ref{fig:BSE_N2H4_reduced},
but for amino-acid Glycine, C$_2$H$_5$NO$_2$, with the BSE matrix size $6000\times 6000$. 
In this case truncation threshold 
$\varepsilon=2\cdot 10^{-1}$ leads to the rank parameters $R_V=54$ , $R_{\overline{W}}=50$, $R_{\widetilde{W}}=50$,
and the error for the minimal eigenvalue, $\omega_1 = 0.2432$ hartree, equals to $0.027$ hartree.   
For $\varepsilon= 10^{-1}$ we have the rank parameters $R_V=100$ , $R_{\overline{W}}=215$, $R_{\widetilde{W}}=129$,
and the error for the minimal eigenvalue equals to $0.014$ hartree ($0.38$eV), while 
the choice $\varepsilon= 10^{-2}$ again ensures the accuracy of the order of $10^{-6}$.

\begin{figure}[htbp]
\centering
\includegraphics[width=5.5cm]{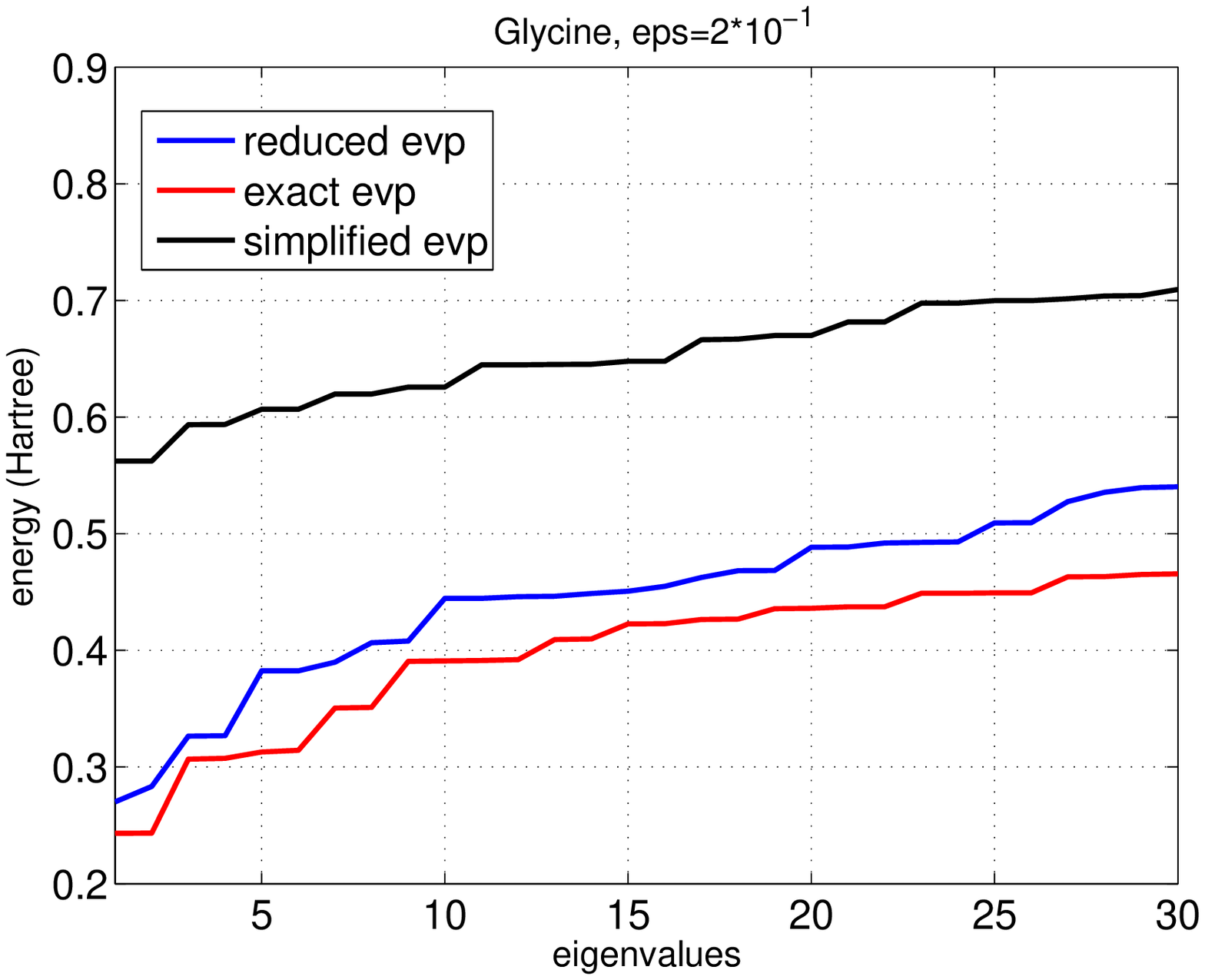} 
\includegraphics[width=5.3cm]{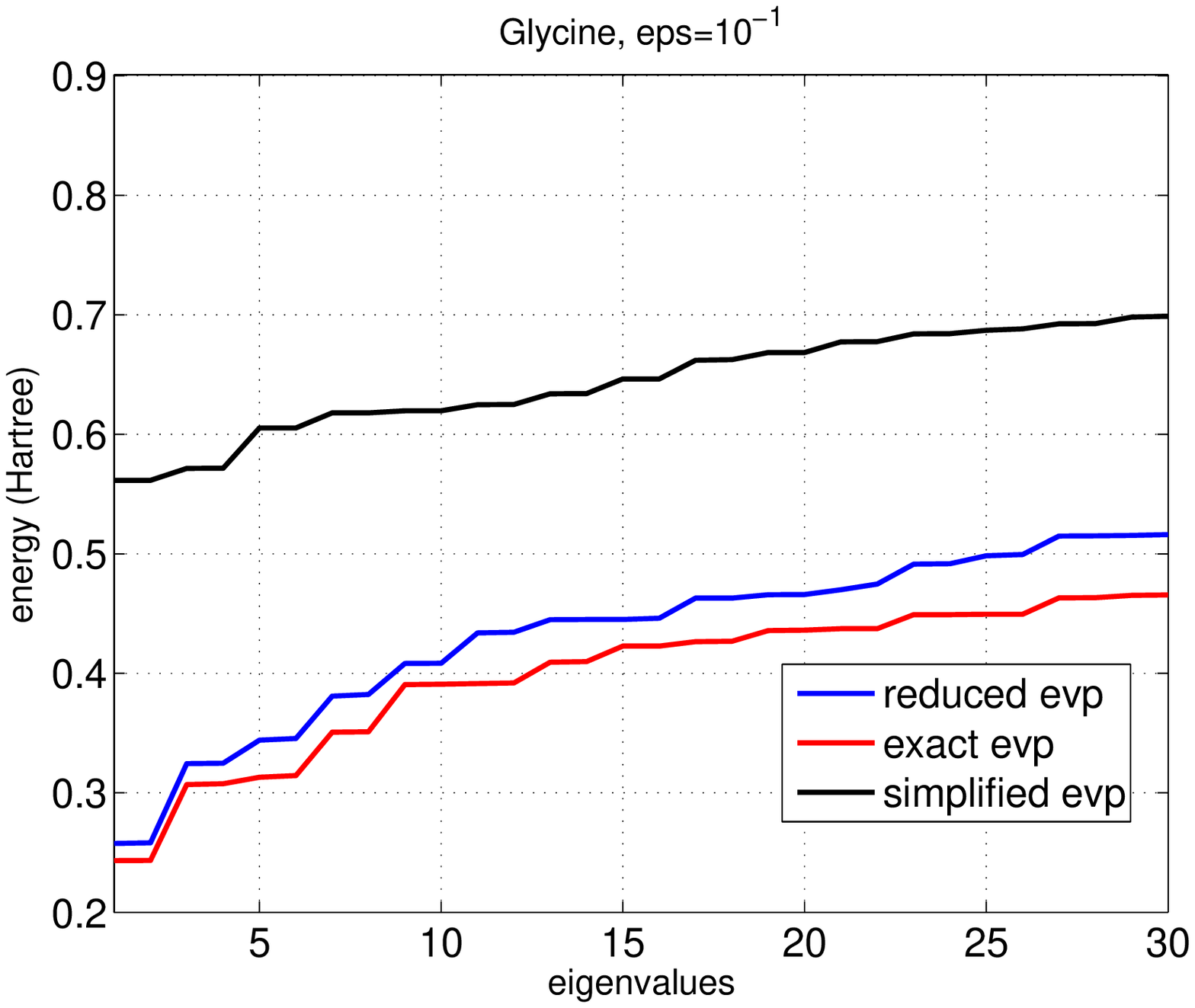}
\includegraphics[width=5.4cm]{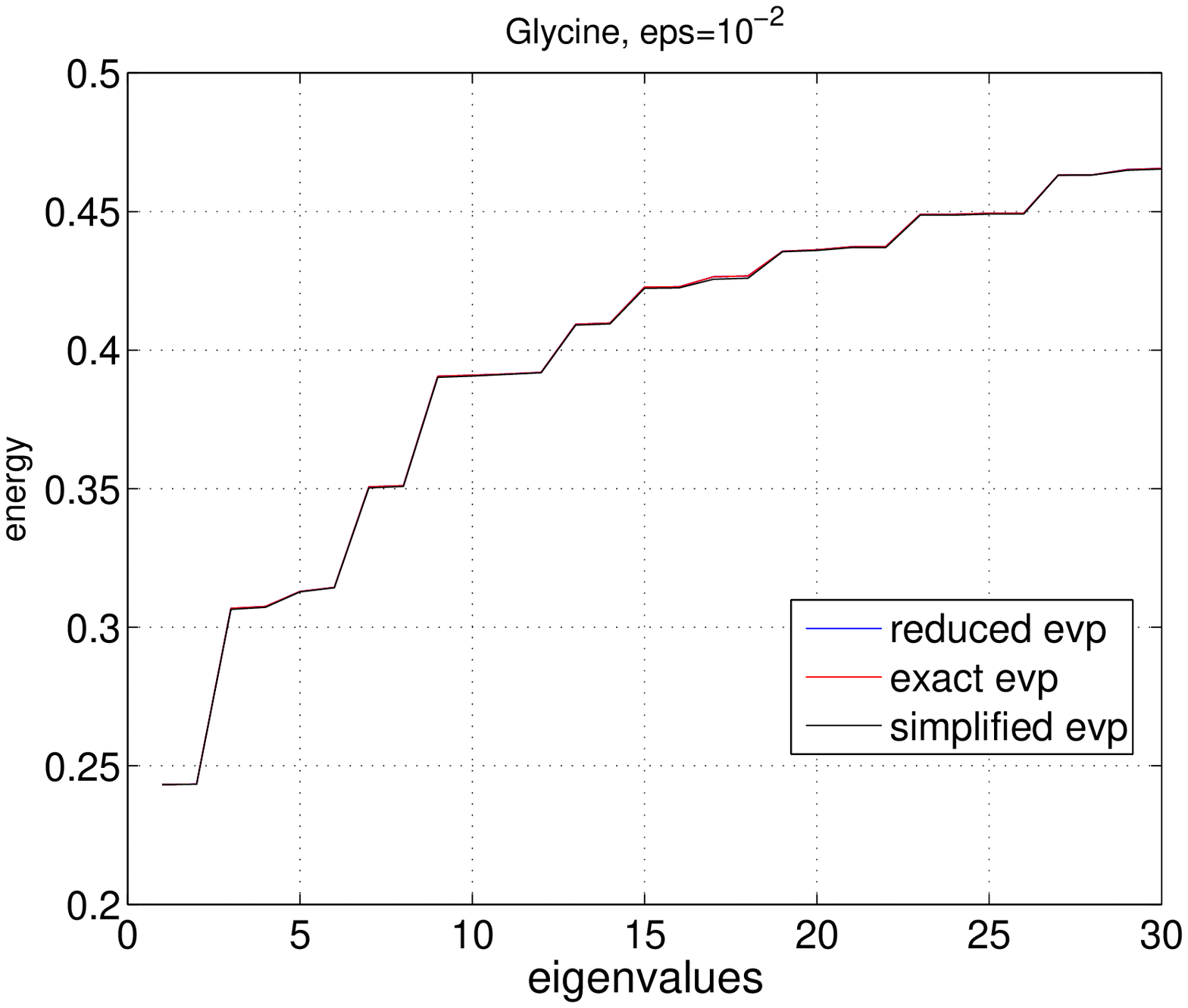}
\caption{\small Comparison of $m_0=30$ lower eigenvalues for the reduced and exact BSE 
systems vs. $\varepsilon$ in the case of Glycine amino acid.}
\label{fig:BSE_Glycine_reduced}  
\end{figure}

% % \begin{figure}[htbp]
% % \centering
% % \includegraphics[width=5.4cm]{spectr_H2O_30_ini.eps}  
% % \includegraphics[width=5.4cm]{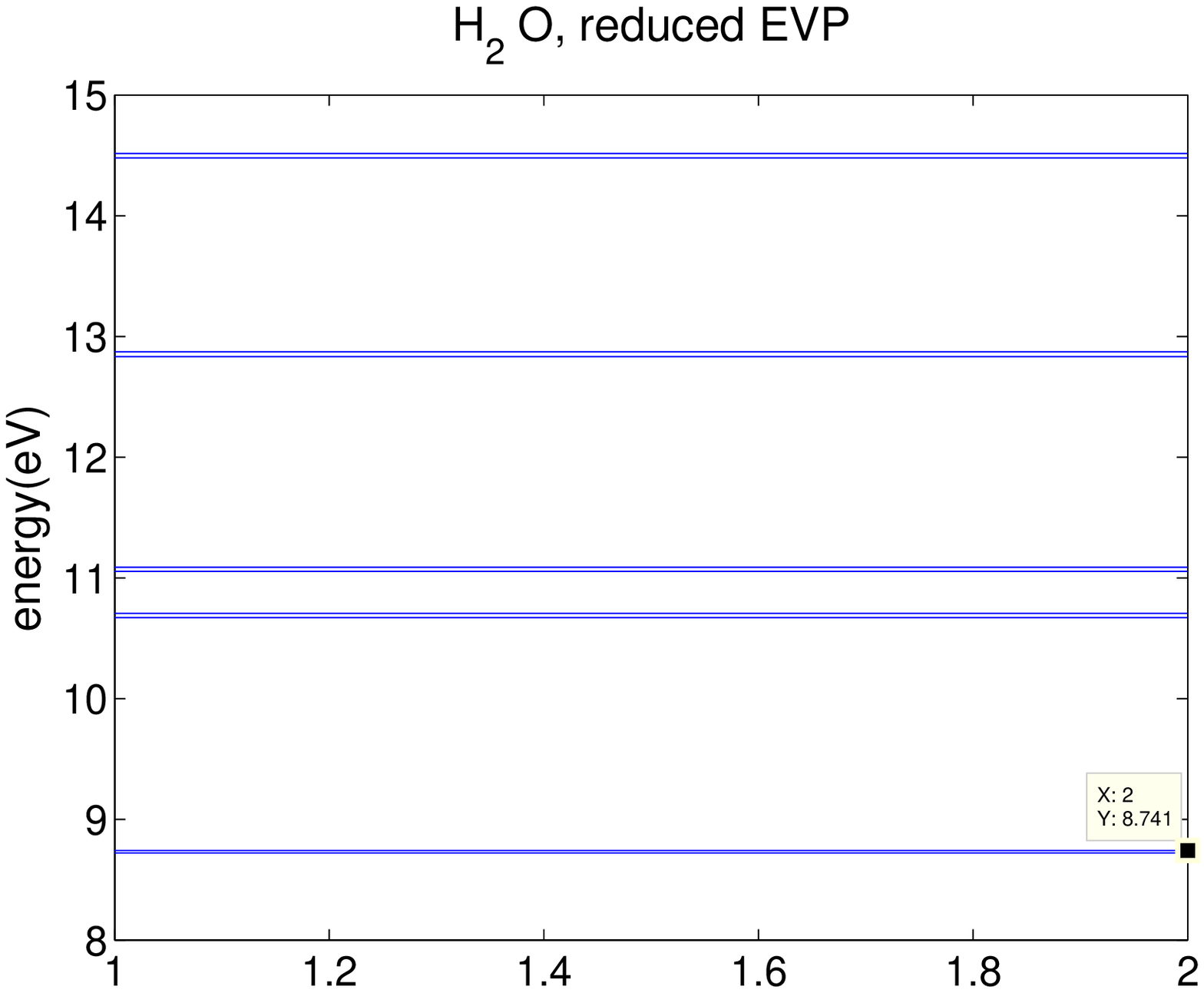}
% % \caption{\small Comparison of eigenvalues for the reduced an exact BSE systems in the case of 
% % H$_2$O molecule with $m_0=30$.}
% % \label{fig:BSE_H2O_eigs}  
% % \end{figure}

Figure \ref{fig:BSE_NH3_reduced}, left, demonstrates the lowest part of the BSE  spectrum
corresponding to the reduced EVP 
in the case of a water molecule, H$_2$O, (rank truncation threshold  $\varepsilon = 10^{-1} $). 
The five lowest values of the computed excitation energy are approximately $8.7$eV, 
$10.7$eV, $11.1$eV, $12.8$eV, and $14.5$eV, see cf. \cite{HerSchmSchw:08}.
Figure \ref{fig:BSE_NH3_reduced}, center and right, illustrates the BSE energy spectrum of the H$_2$O
molecule (based on HF calculations with cc-pDVZ-48 GTO basis)
for the lowest $N_{red}=30$ eigenvalues vs. the rank truncation parameter 
$\varepsilon= 0.6$ and $0.1$, where the ranks of $V$ and the BSE matrix 
block $\widetilde{W}$ are $4$, $5$ and $28$, $30$, respectively, while the block 
$\overline{W}$ remains unchanged. 
For the choice $\varepsilon= 0.6$ and $\varepsilon=0.1$, the error in the 1st (lowest)
eigenvalue for the solution of the problem in reduced basis  is about $0.11$eV  and $0.025$eV, 
correspondingly.
The CPU time in the laptop Matlab implementation of each example is about $5$sec.

\begin{figure}[htbp]
\centering
\includegraphics[width=5.7cm]{spectr_H2O_30_reduced.eps}
\includegraphics[width=5.0cm]{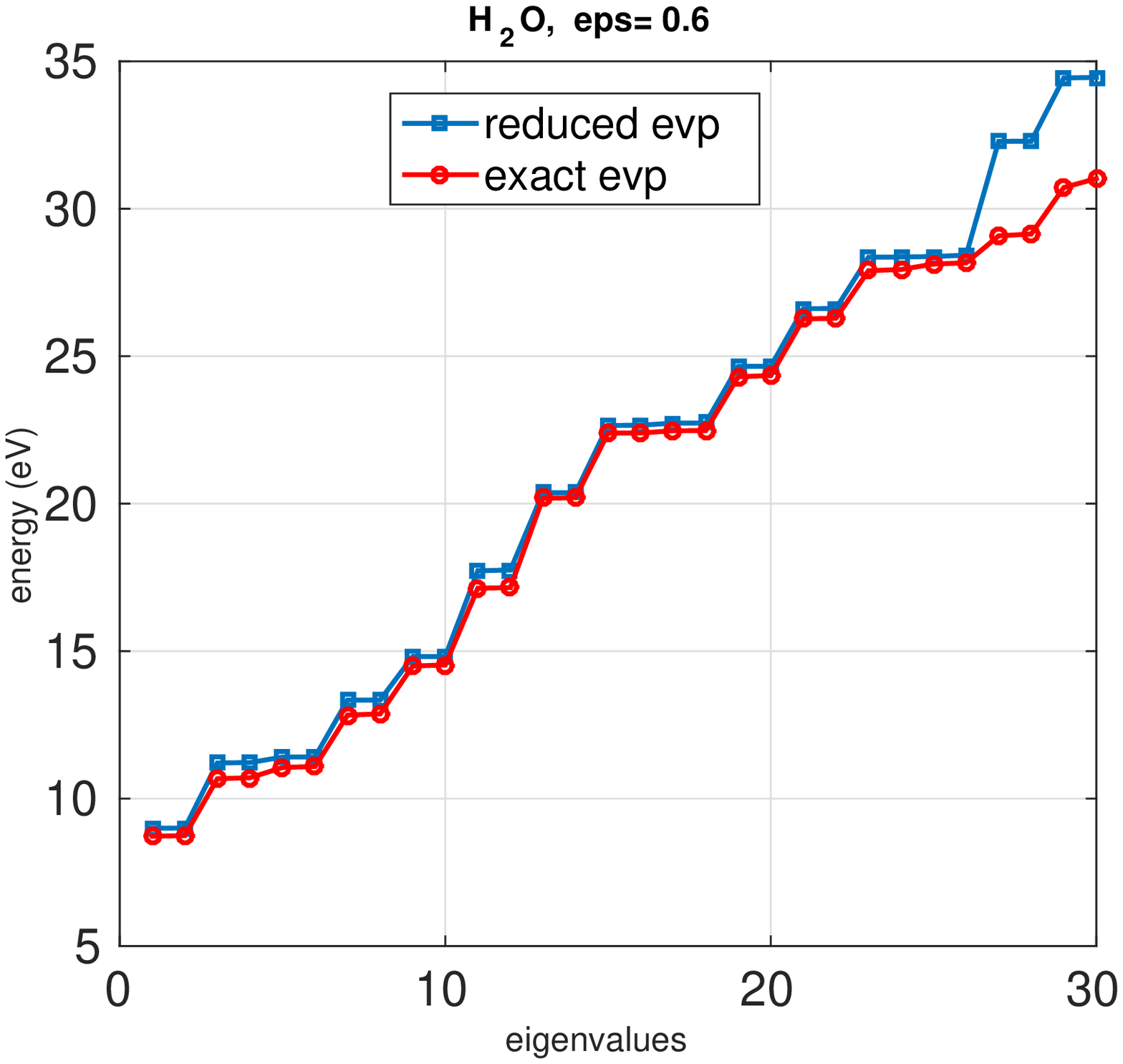} 
\includegraphics[width=5.0cm]{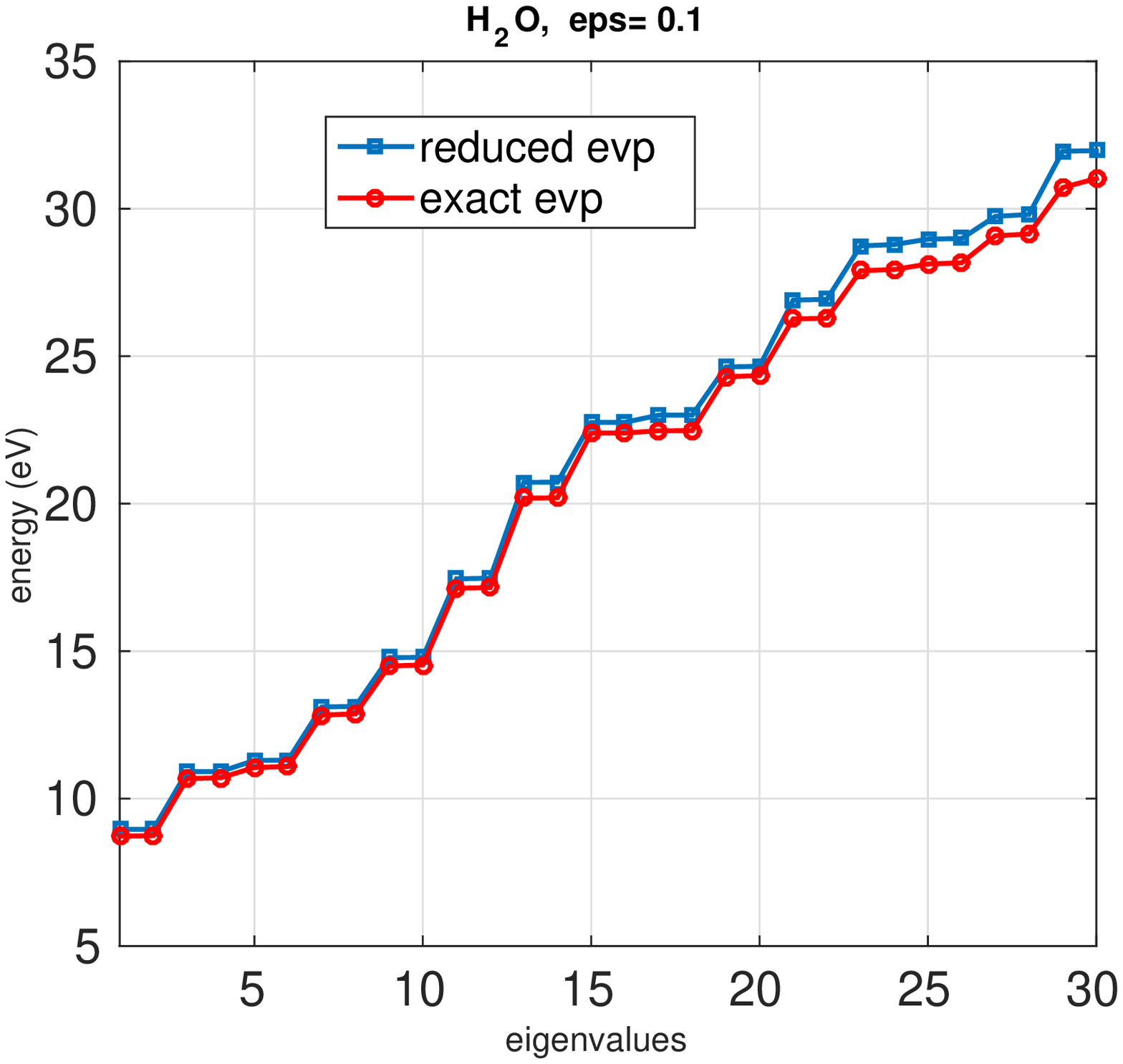}
\caption{\small Comparison of $m_0=30$ lower eigenvalues for the reduced and exact BSE
systems for H$_2$O molecule: $\varepsilon=0.6$, center; $\varepsilon=0.1$, right.}
\label{fig:BSE_NH3_reduced}
\end{figure}

\subsubsection{Comparison with the Tamm-Dancoff approximation} \label{sssec:BSE_TammDanc}

It is interesting to compare the full BSE model with the so-called Tamm-Dancoff approximation (TDA)
\cite{Cas_BSE:95},
which corresponds to setting the matrix $B=0$ in equation (\ref{eqn:BSE-GW1}).
This simplifies the equation (\ref{eqn:BSE-GW1}) to a standard Hermitian eigenvalue problem
\begin{equation} \label{eqn:BSE-Tamm-Danc} 
A {\bf x}_n = \mu_n {\bf x}_n, \quad {\bf x}_n \in \mathbb{R}^{N_{ov}}
\end{equation}
with the reduced matrix size $N_{ov}$. The reduced basis approach via low-rank approximation
described in \S\ref{sssec:Reduced_BSE}
can be applied directly to this equation. 

Below we present numerical tests
indicating that the approximation error introduced by TDA method compared with
the initial BSE system (\ref{eqn:BSE-GW1}) remains on the level of $0.003$ hartree for several
compact molecules, see Figure \ref{fig:BSE_TDA}.  
\begin{figure}[htbp]
\centering
\includegraphics[width=5.0cm]{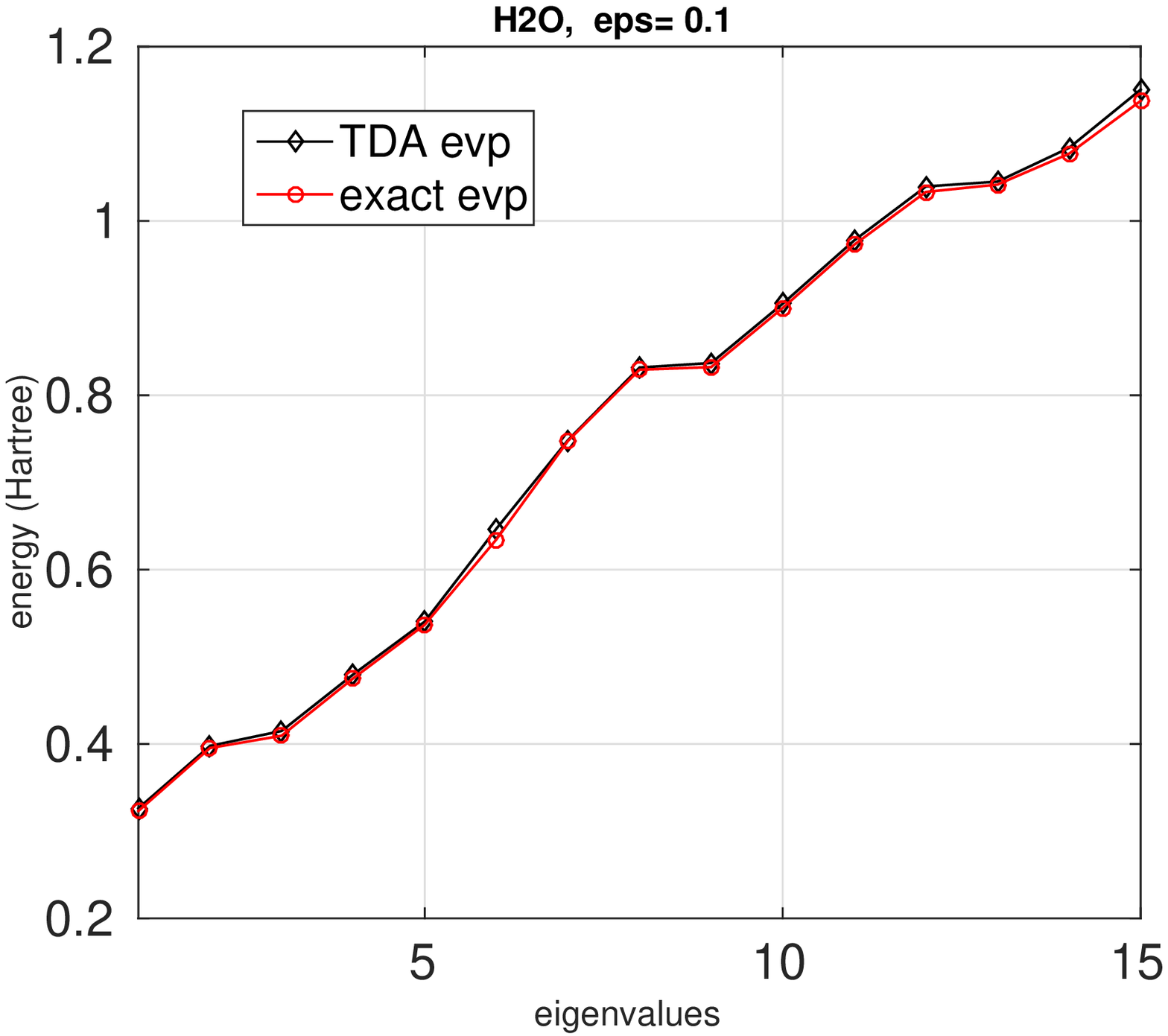}\quad
\includegraphics[width=5.0cm]{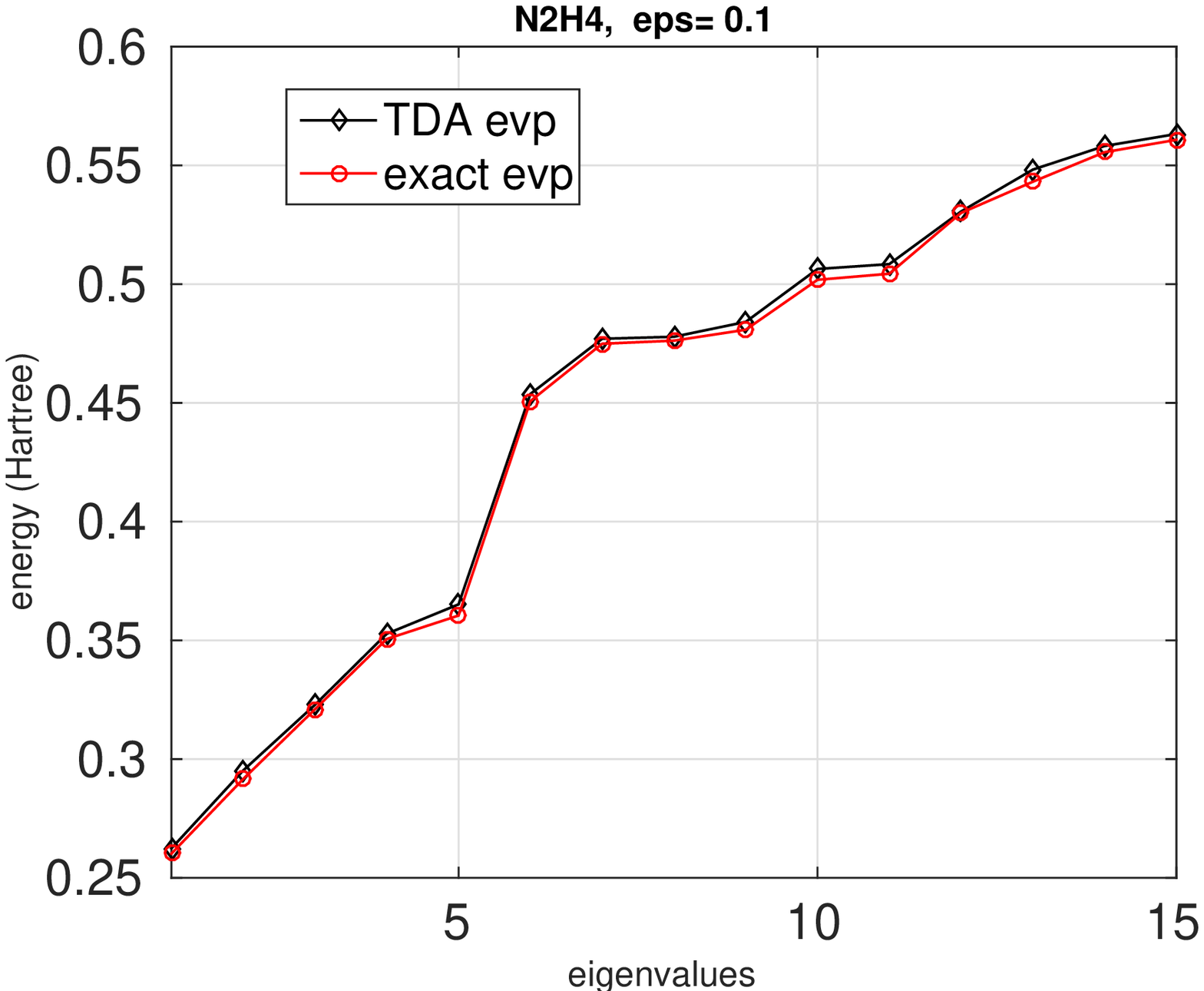} \quad
\includegraphics[width=5.0cm]{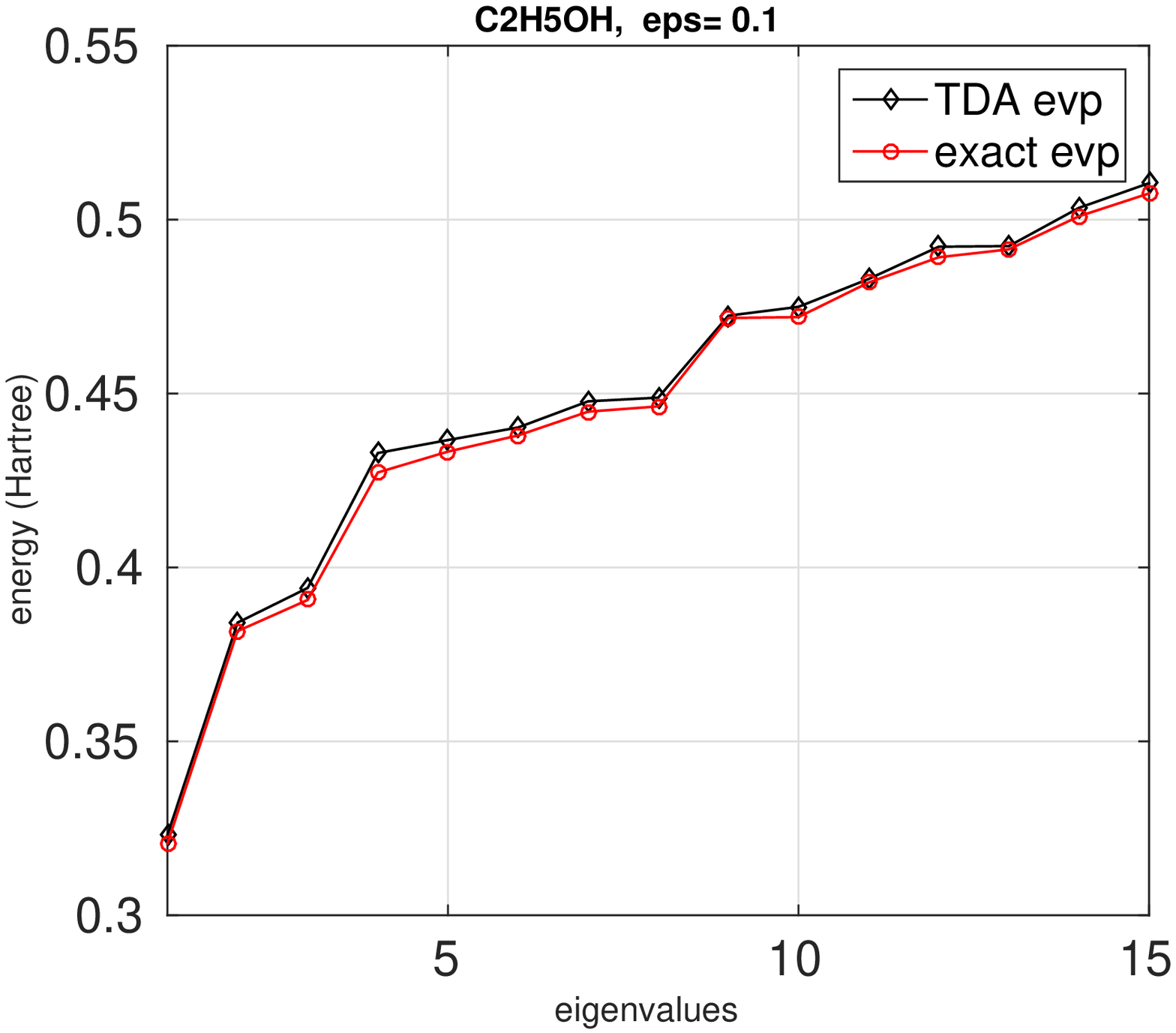}
\caption{\small Comparison between $m_0=30$ lower eigenvalues $\mu_n$ and $\omega_n$
for the TDA and full BSE models, respectively, on the examples of
 H$_2$O, N$_2$H$_4$, and C$_2$H$_5$OH molecules; $\varepsilon=0.1$.}
\label{fig:BSE_TDA}
\end{figure}

Table \ref{table_e_Nred_TDA} indicates a tendency to decrease the TDA model error for larger molecules.

\begin{center}%
\begin{table}[htb]
\begin{center}
\begin{tabular}
[c]{|c|c|r|r|r|r|}%
\hline
 &   H$_2$O  & H$_2$O$_2$ & N$_2$H$_4$ & C$_2$H$_5$OH & Glycine   \\
\hline\hline
err(hartree)& $0.0031$ & $0.0051$ & $0.0022$  & $0.0024$   & $0.0017$      \\
% \hline
% err (eV)    &  $0.083$ & $0.137$  & $0.059$    & $0.064$   & $0.045$       \\
\hline\hline
 \end{tabular}
\end{center}
\caption{The model error $|\mu_1 - \omega_1|$ in TDA approximation for different molecules, $\varepsilon = 0.1$.}
\label{table_e_Nred_TDA}
\end{table}
\end{center}

\section{Conclusions}\label{sec:Conclus}

The new reduced basis method for solving the BSE equation based on the low-rank approximation of matrix blocks
was presented and analyzed. The potential  efficiency of the approach is demonstrated numerically 
on the solution of
large scale Bethe-Salpeter eigenvalue problem for some moderate size molecules and small
amino-acids\footnote{For {\it ab-initio} electronic structure calculations we use the tensor-structured
Hartree-Fock solver \cite{VeKhBoKhSchn:12,vekh:13} implemented in Matlab,
employing the rank-structured calculation of the core Hamiltonian and TEI, using 
the discrete representation of basis 
functions on $n\times n\times n$ 3D Cartesian grids. The arising 3D convolution integrals  
with the Newton kernel are replaced by algebraic operations in 1D complexity.}.
The $\varepsilon$-rank bounds for the requested sub-tensors of the TEI tensor, represented in the 
molecular orbitals (MO) basis set, were proven. 
We justify the quadratic error behavior in the excitation energies with respect
to the accuracy of the rank approximation.  Asymptotic estimates on the storage demands are provided. 

The basic computational scheme of the reduced basis method include:
\begin{enumerate}
 \item Precomputing step I: Given the set of Gaussian type orbitals, compute the related
 TEI tensor in the form of low-rank Cholesky decomposition.
 \item Precomputing step II: Calculate the MO basis set and related energy
 spectrum by solving the Hartree-Fock eigenvalue problem.
 \item Project the TEI tensor onto the MO basis set in the form of low-rank factorization. 
 
\item Compute the diagonal plus low-rank approximations to the 
 matrix blocks $A$ and $B$ and set up the auxiliary eigenvalue problem via rank-structured 
 approximation to the BSE matrix.

 \item Select the reduced basis set from eigenvectors corresponding
 to several lowest eigenstates of the auxiliary structured eigenvalue problem.
 
 \item Compute the Galerkin projection of the exact BSE system matrix onto the reduced basis
 set and solve a small size reduced spectral problem by direct diagonalization. 
\end{enumerate}

We demonstrate that the approximation error of the reduced basis method (1) - (6) 
can be reduced dramatically 
if the matrix block $\overline{W}$ remains unchanged. 
We also analyze the numerical error in the simplified BSE model, 
the so-called Tamm-Dancoff approximation (TDA), specified by the first diagonal 
matrix block $A$.
 
The various numerical tests demonstrate that the reduced basis set obtained by solving 
the auxiliary eigenvalue problem based on the low-rank approximation to BSE matrix blocks 
(with the adaptively chosen rank parameter of the order of several tens) 
allows to achieve the sufficient accuracy for several lowest excited states.
We justify numerically that the simplified TDA equation is characterized by the model error
of the order of $0.003$ hartree ($0.08$ eV) for all molecular systems considered so far,
with the tendency to decrease for large molecules, say $0.0017$ hartree ($0.045$ eV) for Glycine amino acid.
We note in closing that here we mainly focus on the numerical efficiency of the new 
computational scheme with respect to the accuracy vs. separation rank, tested on the
Hartree-Fock-BSE and Hartree-Fock-BSE-TDA calculations for some moderate size molecules. 

The future work is concerned with the design of the efficient linear algebra
algorithms for fast solution of arising large eigenvalue problems with diagonal plus rank-structured 
matrices.
The approach can be also extended to the case of finite non-periodic lattice systems 
(e.g. quantum dots or nanoparticles)
providing gainful opportunities for data-sparse matrix calculus.

Another possible direction includes the quantized tensor approximation (QTT) \cite{KhQuant:09}
of the matrices involved in order to perform the super-fast matrix-vector calculations
in the QTT tensor arithmetics with the $\varepsilon$-rank truncation (see e.g. \cite{DoKhSavOs_mEIG:13}).

\vspace{0.5cm}

{\bf Acknowledgements.} The authors would like to acknowledge Prof. A. Savin (UPMC, Paris)
and Prof. J. Toulouse (UPMC, Paris) for valuable comments on the problem setting 
for the BSE model and for providing the useful references.

\section{Appendix: The Hartree-Fock model}\label{sec:Append}

%\subsection{Appendix 1: The Hartree-Fock model}\label{ssec:HF}

The $2N_{orb}$-electrons Hartree-Fock equation for pairwise $L^2$-orthogonal
electronic orbitals, $\psi_i: \mathbb{R}^3\to\mathbb{R} $,
$\psi_i\in H^1(\mathbb{R}^3)$, reads as
\begin{equation} \label{HaFo eq.}
  {\cal F}  \psi_i({ x}) = \lambda_i \, \psi_i({ x}),\quad
  \int_{\mathbb{R}^3}\psi_i\psi_j  dx =\delta_{ij},
   \;\; i,j=1,...,N_{orb},
\end{equation}
with ${\cal F}$ being the nonlinear  Fock operator 
\[
{\cal F} :=-\frac{1}{2} \Delta + V_c + V_H + {\cal K}.
\]
Here the nuclear potential takes the form  
\[
V_c(x)=- \sum_{\nu=1}^{M}\frac{Z_\nu}{\|{x} -a_\nu \|},\quad
Z_\nu >0, \;\; a_\nu\in \mathbb{R}^3,
\]
while the Hartree potential $V_H({ x})$ and the nonlocal exchange operator $\cal K$ read as
 \begin{equation}
   V_{H}({x}):= \rho \star \frac{1}{\|\cdot\|} =  \int_{\mathbb{R}^3}
 \frac{\rho({y})}{\|{x}-{y}\|}\, d{y} , \quad{x}\in {\mathbb{R}^3},
 \label{Reference_HP}
\end{equation}
and
\begin{equation}
\left( {\cal K}\psi\right) ({ x}):=- % \frac{1}{2}
\sum_{i=1}^{N_{orb}}\left(\psi \, \psi_i\star \frac{1}{\|\cdot\|} \right)
\psi_i (x)=
 - \frac{1}{2}\int_{\mathbb{R}^3} \; 
\frac{\tau({ x}, { y})}{\|{x} - { y}\|}\,\psi({y}) d{ y},
\label{VExch}
\end{equation} 
respectively. Conventionally, we use the definitions
\begin{equation*} \label{DMatr-eq}
\tau({ x}, { y}) := 2\sum_{i=1}^{N_{orb}} \; \psi_i({ x}) \psi_i({ y}), 
\quad  \rho( {x}) :=\tau({x},{x}),\quad
\end{equation*}
for the density matrix $\tau({x}, { y})$, and electron density $\rho({x})$.

Usually, the Hartree-Fock equation is approximated by the standard Galerkin projection of the 
initial problem  (\ref{HaFo eq.}) posed in $H^1(\mathbb{R}^3)$.
For a given finite Galerkin basis set $\{g_\mu \}_{1\leq \mu \leq N_b}$,
$g_\mu\in H^1(\mathbb{R}^3) $, the occupied
molecular orbitals $\psi_i$ are represented (approximately) as
 \begin{equation}\label{expand}
\psi_i=\sum\limits_{\mu=1}^{N_b} C_{\mu i} g_\mu, \quad i=1,...,N_{orb}.
\end{equation}
To derive an equation for the unknown coefficients matrix
$C=\{C_{ \mu i} \}\in \mathbb{R}^{N_b \times  N_{orb}}$, first, we introduce
the mass (overlap) matrix $S=\{S_{\mu \nu} \}_{1\leq \mu, \nu \leq N_b}$, given by
\[
S_{\mu \nu}=\int_{\mathbb{R}^3} g_\mu g_\nu  dx,
\]
and the stiffness matrix $H=\{h_{\mu \nu}\}$ of the core Hamiltonian
${\cal H}=-\frac{1}{2} \Delta + V_c $ (the single-electron integrals),
\[
h_{\mu \nu}= \frac{1}{2} \int_{\mathbb{R}^3}\nabla g_\mu \cdot \nabla g_\nu dx +
\int_{\mathbb{R}^3} V_c(x) g_\mu g_\nu dx, \quad 1\leq \mu, \nu \leq N_b.
\]
The core Hamiltonian matrix $H$ can be precomputed in $O(N_b^2)$ operations via grid-based approach.
%, see \cite{KhorVBAndrae:11} for the detailed description of the grid-based approach.

Given the finite basis set $\{g_\mu \}_{1\leq \mu \leq N_b}$, $g_\mu\in H^1(\mathbb{R}^3) $, the
associated fourth order two-electron integrals (TEI) tensor,
${\textbf{B}}=[b_{\mu \nu \lambda \sigma}]$, is defined entrywise by
\begin{equation} \label{eqn:btensor}
b_{\mu \nu \lambda \sigma}= \int_{\mathbb{R}^3}\int_{\mathbb{R}^3}
\frac{g_\mu(x) g_\nu(x) g_\lambda(y) g_\sigma(y) }{\| x-y \|} dx dy, \quad
\mu, \nu, \lambda, \sigma \in \{1,...,N_b\}=:{\cal I}_b.
\end{equation}

In computational quantum chemistry the nonlinear terms representing the Galerkin 
approximation to the Hartree and
exchange operators are calculated traditionally by using the low-rank Cholesky decomposition of the
TEI tensor ${\bf B}=[b_{\mu \nu \kappa \lambda}]$ as defined in (\ref{eqn:btensor}),
(\ref{eqn:BCholesky}) that initially has the computational and storage complexity  of order $O(N_b^4)$.
 
Introducing the $N_b\times N_b$ matrices $J(D)$ and $K(D)$,
\begin{equation}\label{C-K matr}
J(D)_{\mu \nu}= \sum\limits_{\kappa, \lambda=1}^{N_b}
b_{\mu \nu, \kappa \lambda}D_{\kappa \lambda},\quad
K(D)_{\mu \nu}= - \frac{1}{2} \sum\limits_{\kappa, \lambda=1}^{N_b}
b_{\mu \lambda, \nu \kappa}D_{\kappa \lambda},
\end{equation}
where  $D= 2 C C^T \in \mathbb{R}^{N_b\times N_b}$ is the rank-$N_{orb}$ symmetric density matrix,
% such that
% $$
% rank(D) =N_{orb}\ll N_b,
% $$
one then represents the complete Fock matrix $F$ by
\begin{equation}\label{Fock_matr}
 F(D)= H+ J(D) + K(D).   %,\quad G(D)=J(D) + K(D).
\end{equation}

The resultant Galerkin system of nonlinear equations
for the coefficients matrix $C\in \mathbb{R}^{N_b \times N_{orb}}$, 
and the respective eigenvalues $\Lambda$, reads as
\begin{align} \label{HF discr}
  F(D) C &= SC \Lambda, \quad \Lambda= diag(\lambda_1,...,\lambda_{N_b}), \\
  C^T SC   &=  I_N,   \nonumber
\end{align}
where the second equation represents the orthogonality constraints
$\int_{\mathbb{R}^3}\psi_i\psi_j dx=\delta_{ij}$, and
$I_N$ denotes the $N_b\times N_b $ identity matrix.

\begin{footnotesize}

\bibliographystyle{abbrv}
\bibliography{BSE_Fock.bib}
\end{footnotesize}
\end{document}